\title[$L^{(\alpha)}$-multiplier Sequences]{Multiplier Sequences for Generalized Laguerre Bases}
\author{Tam\' as Forg\' acs}
\address{Department of Mathematics\newline \indent
California State University, Fresno, 93740}
\email{{\tt tforgacs@csufresno.edu}}
\author{Andrzej Piotrowski}
\address{Department of Natural Sciences\newline\indent
University of Alaska Southeast, Juneau, 99801}
\email{{\tt apiotrowski@uas.alaska.edu}}
\date{}
\newcommand{\N}{{\mathbb N}}
\newcommand{\R}{{\mathbb R}}
\newcommand{\set}[1]{\left\{ #1 \right\}}
\newcommand{\mbb}{\mathbb}
\newcommand{\ds}{\displaystyle}
\newcommand{\seq}{\ds \{\gamma_k\}_{k=0}^{\infty}}
\begin{document}
\begin{abstract}
In this paper we present a complete characterization of geometric and linear $L^{(\alpha)}$-multiplier sequences. In addition, we give a partial characterization of the generic $L^{(\alpha)}$-multiplier sequence, and pose some open questions regarding polynomial type $L^{(\alpha)}$-multiplier sequences.
\end{abstract}
\maketitle

\theoremstyle{plain}
\newtheorem{thm}{\sc Theorem}[section]
\newtheorem{lem}[thm]{\sc Lemma}
\newtheorem{o-thm}[thm]{\sc Theorem}
\newtheorem{prop}[thm]{\sc Proposition}
\newtheorem{cor}[thm]{\sc Corollary}

\theoremstyle{definition}
\newtheorem{conj}[thm]{\sc Conjecture}
\newtheorem{defn}[thm]{\sc Definition}
\newtheorem{qn}[thm]{\sc Question}
\newtheorem{ex}[thm]{\sc Example}
\newtheorem{pr}[thm]{\sc Problem}

\theoremstyle{remark}
\newtheorem*{rmk}{\sc Remark}
\newtheorem*{ack}{\sc Acknowledgment}

\section{Introduction}
Corresponding to any sequence of real numbers $\seq$ one can define a linear operator $T$ on $\mbb{R}[x]$ by declaring $T[x^n] = \gamma_n x^n$ for all $n$.  If the linear operator $T$ has the property that $T[p]$ has only real zeros whenever $p$ has only real zeros, then $\{\gamma_k\}_{k=0}^{\infty}$ is called a \textit{multiplier sequence}.  Examples of such sequences were demonstrated in the late 1800's by Jensen and Laguerre and, in the early 1900's, all such sequences were completely characterized by P\'olya and Schur.  
\begin{thm}\label{PS}
\emph{(P\'olya-Schur \cite{PS})} Let $\seq$ be a sequence of nonnegative real numbers.  The following are equivalent.
\begin{enumerate}
\item{$\seq$ is a multiplier sequence}
\item{For each $n$, the polynomial $\ds T[(1+x)^n]:= \sum_{k=0}^{n}\binom{n}{k} \gamma_k x^k$ has only real zeros}
\item{The series $\ds \varphi(z) = \sum_{k=0}^{\infty} \frac{\gamma_k}{k!}z^k$ converges in the whole plane and either $\varphi(z)$ or $\varphi(-z)$ is of the form $\ds c e^{\sigma z}z^m \prod_{k=1}^{\omega}\left(1+ w_k{z}\right)$ where $c\in\mbb{R}$, $\sigma\geq 0$, $m$ is a nonnegative integer, $0\leq \omega\leq \infty$, $z_k>0$, and $\ds \sum_{k=1}^{\omega} {w^k}<\infty$}.
\end{enumerate}
\end{thm}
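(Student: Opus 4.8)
The plan is to establish the cycle $(1)\Rightarrow(2)\Rightarrow(3)\Rightarrow(1)$. The implication $(1)\Rightarrow(2)$ is immediate, since $(1+x)^n$ has only real zeros and hence so does $T[(1+x)^n]=\sum_{k=0}^n\binom{n}{k}\gamma_k x^k$. The analytic weight of the theorem sits in $(2)\Rightarrow(3)$: the idea is to realize $\varphi$ as a locally uniform limit of suitably rescaled copies of the real-rooted polynomials $T[(1+x)^n]$ and then read off its shape from Laguerre--Pólya theory. The implication $(3)\Rightarrow(1)$ will be obtained by approximating $\varphi$ by polynomials of the same type, checking those induce multiplier sequences, and passing to the limit via Hurwitz's theorem; the only external ingredient is the Schur--Szeg\H{o} composition theorem.

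For $(2)\Rightarrow(3)$, I would first show $\varphi$ is entire. Newton's inequalities applied to the real-rooted polynomial $T[(1+x)^n]$ give $\gamma_k^2\ge\gamma_{k-1}\gamma_{k+1}$ for $1\le k\le n-1$; letting $n\to\infty$, this holds for all $k\ge 1$, so $\{\gamma_k\}$ is log-concave. Hence $\gamma_{k+1}/\gamma_k\le\gamma_1/\gamma_0=:r$ and $\gamma_k\le\gamma_0 r^k$, so $\varphi(z)=\sum_k\gamma_k z^k/k!$ converges on $\mathbb{C}$ with $|\varphi(z)|\le\gamma_0 e^{r|z|}$; in particular $\varphi$ is entire of order at most $1$ and of finite exponential type. (If some $\gamma_k$ vanishes, log-concavity forces the support of $\{\gamma_k\}$ to be an interval $\{m,m+1,\dots\}$; after dividing out $z^m$ the same analysis applies, and if the support is finite $\varphi$ is already a polynomial of the desired shape.) Since $\gamma_k\ge 0$, the polynomial $T[(1+x)^n]$ has no positive zeros, and neither does its rescaling $q_n(z):=T[(1+x)^n]\big|_{x=z/n}=\sum_k\frac{n!}{(n-k)!\,n^k}\,\frac{\gamma_k}{k!}\,z^k$; the bound $\gamma_k\le\gamma_0 r^k$ makes $q_n\to\varphi$ uniform on compacta, so by Hurwitz's theorem $\varphi$ has only real, non-positive zeros. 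Thus $\varphi$ is a non-negative-coefficient entire function of finite exponential type whose zeros all lie in $(-\infty,0]$. Hadamard's factorization theorem (order at most $1$ rules out any $e^{-az^2}$ factor), together with the non-negativity of the Taylor coefficients (which forces the linear coefficient $\sigma$ of the exponential factor to be $\ge 0$) and the classical fact that a finite-type entire function whose zeros lie in a half-line has summable zero-reciprocals (which removes the canonical-product convergence factors), yields $\varphi(z)=c\,e^{\sigma z}z^m\prod_{k=1}^{\omega}(1+w_k z)$ with $c>0$, $\sigma\ge 0$, $m\in\mathbb{N}_0$, $w_k>0$, and $\sum w_k<\infty$. (Under the standing hypothesis $\gamma_k\ge 0$, it is always $\varphi(z)$ itself, not $\varphi(-z)$, that has this form.)

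For $(3)\Rightarrow(1)$, put $\varphi_N(z)=c\,(1+\tfrac{\sigma z}{N})^N\,(z+\tfrac1N)^m\,\prod_{k=1}^N(1+w_k z)$, a real polynomial all of whose zeros are real and negative, so that $\varphi_N(z)=c_N\prod_i(1+b_i z)$ with $c_N>0$ and each $b_i\ge 0$, and $\varphi_N\to\varphi$ uniformly on compacta (this uses $\sum w_k<\infty$). Let $T_N$ be the diagonal operator with $T_N[x^k]=\gamma_k^{(N)}x^k$, $\gamma_k^{(N)}=k!\,[z^k]\varphi_N$. Then $\{\gamma_k^{(N)}\}$ is a multiplier sequence: for every $n$, $T_N[(1+x)^n]=c_N\sum_k (n)_k e_k(b)\,x^k$, where $(n)_k$ is the falling factorial, and its homogenization $\sum_k (n)_k e_k(b)\,x^k y^{n-k}=\prod_i\bigl(1+b_i x\,\tfrac{d}{dy}\bigr)\!\left[y^n\right]$ is real-rooted in $y$ by the Hermite--Poulain theorem; being homogeneous of degree $n$, it is then real-rooted in $x$ as well, so $T_N[(1+x)^n]$ is real-rooted with all zeros non-positive. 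For an arbitrary real-rooted polynomial $Q$ of degree $n$, the polynomial $T_N[Q]$ is exactly the Schur--Szeg\H{o} composition of $Q$ with $T_N[(1+x)^n]$, hence real-rooted. Finally, since $[z^k]\varphi_N\to[z^k]\varphi$ we get $\gamma_k^{(N)}\to\gamma_k$ for each $k$, so $T_N[Q]\to T[Q]$ coefficientwise for every fixed polynomial $Q$; as each $T_N[Q]$ has only real zeros, Hurwitz's theorem gives that $T[Q]$ does too, i.e. $\{\gamma_k\}$ is a multiplier sequence.

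I expect the main obstacle to be $(2)\Rightarrow(3)$, in two places. First, showing that $\varphi$ is even entire is not free: one must squeeze a geometric bound on $\gamma_k$ out of the real-rootedness of the sections $T[(1+x)^n]$, which is where Newton's inequalities do the work. Second, pinning down the \emph{exact} Hadamard form requires ruling out an $e^{-az^2}$ factor and upgrading an a priori genus-one canonical product to a genus-zero one with $\sum w_k<\infty$; this rests on the coefficient positivity together with the classical summability result for zeros of finite-type functions confined to a half-line. The various limiting steps in both directions --- uniform convergence on compacta, the hypotheses of Hurwitz's theorem, and exclusion of degenerate vanishing of the limits --- are routine but must be stated carefully; by contrast, the finite-section computation in $(3)\Rightarrow(1)$ via homogenization and Hermite--Poulain is the cleanest part of the argument.
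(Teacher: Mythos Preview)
The paper does not prove this theorem: it is stated as the classical P\'olya--Schur result (Theorem~1.1), attributed to \cite{PS}, and quoted purely as background for the paper's own work on $L^{(\alpha)}$-multiplier sequences. There is therefore no ``paper's own proof'' against which to compare your proposal.

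For what it is worth, your sketch is essentially the standard classical argument and is correct in outline. The cycle $(1)\Rightarrow(2)\Rightarrow(3)\Rightarrow(1)$ --- with $(2)\Rightarrow(3)$ obtained from Newton's inequalities, the rescaling $q_n(z)=T[(1+x)^n]\big|_{x=z/n}\to\varphi(z)$, Hurwitz, and the Hadamard/Laguerre--P\'olya characterization, and $(3)\Rightarrow(1)$ via truncation of the product, Schur--Szeg\H{o} composition, and Hurwitz again --- is exactly how the theorem is usually proved (see e.g.\ Levin \cite{Levin}). One small remark: the step from ``entire of order $\le 1$, finite type, all zeros in a half-line'' to a genus-zero product with $\sum w_k<\infty$ is most cleanly phrased as the statement that $\varphi$ is a locally uniform limit of polynomials with only real nonpositive zeros and therefore lies in the Laguerre--P\'olya class of type~I, rather than via a separate summability lemma; but your route also works.
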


Similarly, corresponding to any sequence of real numbers $\seq$ one can define a linear operator $T_H$ on $\mbb{R}[x]$ by declaring $T_H[H_n(x)] = \gamma_n H_n(x)$ for all $n$, where $H_n(x)$ denotes the $n^{th}$ Hermite polynomial $\ds H_n(x) = (-1)^n e^{x^2} D^n e^{-x^2}$.  If the linear operator $T_H$ has the property that $T_H[p]$ has only real zeros whenever $p$ has only real zeros, then $\{\gamma_k\}_{k=0}^{\infty}$ is called an \textit{Hermite multiplier sequence}.  Examples of such sequences were demonstrated in the mid 1900's by Tur\'an \cite{Turan}, and also in 2001 by Bleecker and Csordas \cite{BC}.  In 2007, all such sequences were completely characterized by Piotrowski.  
\begin{thm}\label{PS}
\emph{(Piotrowski, Theorem 152 in \cite{andrzej})} Let $\seq$ be a sequence of nonnegative real numbers.  The following are equivalent.
\begin{enumerate}
\item{$\seq$ is a non-trivial Hermite multiplier sequence}
\item{$\seq$ is an nondecreasing multiplier sequence}
\item{The series $\ds \varphi(z) = \sum_{k=0}^{\infty} \frac{\gamma_k}{k!}z^k$ converges in the whole plane and either $\varphi(z)$ or $\varphi(-z)$ is of the form $\ds c e^{\sigma z}z^m \prod_{k=1}^{\omega}\left(1+\frac{z}{z_k}\right)$ where $c\in\mbb{R}$, $\sigma\geq 1$, $m$ is a nonnegative integer, $0\leq \omega\leq \infty$, $z_k>0$, and $\ds \sum_{k=1}^{\omega}\frac{1}{z^k}<\infty$}.
\end{enumerate}
\end{thm}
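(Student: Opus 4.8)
The plan is to prove the two equivalences $(2)\Leftrightarrow(3)$ and $(1)\Leftrightarrow(2)$ separately: the first is an analytic refinement of the P\'olya--Schur theorem, the second rests on the transmutation $H_n(x)=e^{-D^2/4}\big[(2x)^n\big]$. Write $T$ for the classical monomial multiplier operator carrying the same symbol $\seq$. The transmutation yields $T_H=V\circ T\circ V^{-1}$, where $V$ is defined by $V[x^k]=H_k(x)$; concretely $V=D_2\circ e^{-D^2}$, the dilation $D_2:p(x)\mapsto p(2x)$ composed with the ``backward heat'' operator $e^{-D^2}$. A fact worth establishing at the outset is that $V$ maps the set $\mathcal{HP}$ of hyperbolic polynomials into itself (equivalently, $e^{-tD^2}$ preserves hyperbolicity for $t\geq0$), whereas $V^{-1}=e^{D^2}\circ D_{1/2}$ does \emph{not}: already $V^{-1}[(x-a)(x-b)]$ has discriminant proportional to $(a-b)^2-2$, so it is not hyperbolic when the roots are close. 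Thus $\seq$ is a Hermite multiplier sequence exactly when $T$ maps the strictly larger set $V^{-1}(\mathcal{HP})$ into itself, which already hints at why the Hermite condition is finer than the classical one.

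For $(2)\Leftrightarrow(3)$: if $\varphi$ has the stated form with $\sigma\geq1$, then $\varphi\in\lp$ has nonnegative Taylor coefficients, so by P\'olya--Schur $\seq$ is a multiplier sequence; writing $\varphi(z)=e^{\sigma z}\psi(z)$ with $\psi(z)=cz^m\prod_j(1+z/z_j)=\sum_i a_i z^i$, $a_i\geq0$, one gets $\gamma_n=\sum_i a_i\,\frac{n!}{(n-i)!}\,\sigma^{n-i}$, and since $\frac{(n+1)!}{(n+1-i)!}\geq\frac{n!}{(n-i)!}$ and $\sigma^{n+1-i}\geq\sigma^{n-i}$ term by term, $\gamma_{n+1}\geq\gamma_n$. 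Conversely, if $\seq$ is a nondecreasing multiplier sequence, P\'olya--Schur puts $\varphi$ --- not $\varphi(-z)$, whose product form is incompatible with $\gamma_k\geq0$ outside the degenerate cases that ``non-trivial'' excludes --- in the product form with $c\geq0$ and $\sigma\geq0$; and $\sigma<1$ forces, via the same formula for $\gamma_n$ together with the fact that $\psi$ is either a polynomial or a genus-zero product and Newton-type inequalities for the $a_i$, that the ratios $\gamma_{n+1}/\gamma_n$ eventually fall below $1$, contradicting monotonicity. Hence $\sigma\geq1$.

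For $(1)\Rightarrow(2)$ I would first show that a Hermite multiplier sequence is a multiplier sequence. The Appell-type identity $H_n(x+y)=\sum_{k=0}^n\binom{n}{k}(2y)^{n-k}H_k(x)$ shows $H_n(x+y)$ is hyperbolic for real $y$, hence so is $g_y(x):=T_H[H_n(x+y)]=\sum_k\binom{n}{k}(2y)^{n-k}\gamma_k H_k(x)$; then $(2y)^{-n}g_y(2yx)$ is hyperbolic for every $y>0$, and a coefficient count (each fixed Hermite coefficient contributes a bounded power of $y$, the leading one dominating) shows that as $y\to\infty$ these converge to $\sum_k\binom{n}{k}\gamma_k(2x)^k=T[(1+2x)^n]$, which is therefore hyperbolic or identically zero by Hurwitz; as $n$ is arbitrary, P\'olya--Schur gives that $\seq$ is a multiplier sequence. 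It then remains to show it is \emph{nondecreasing}, which by the established $(2)\Leftrightarrow(3)$ is the same as $\sigma\geq1$. For $(2)\Rightarrow(1)$ I would, starting from $T_H=V\,T\,V^{-1}$, reduce a general nondecreasing multiplier sequence --- via its factorization from the previous step, closure of Hermite multiplier sequences under Hadamard products (the operators commute) and suitable coefficientwise limits --- to a small list of building blocks, with the geometric sequences $\{a^n\}_{a\geq1}$ and the linear sequences $\{n+a\}_{a\geq0}$ the natural candidates, and verify each directly: $\{n+a\}$ as a concrete second-order operator built from $H_n'=2nH_{n-1}$ and the three-term recurrence, whose hyperbolicity-preservation follows from a Rolle/Laguerre argument; and $\{a^n\}$ with $a\geq1$ by using $\sum_n a^nH_n(x)t^n/n!=e^{2axt-a^2t^2}$ to recognize the action of $T_H$ on the pertinent family as a genuine zero-locating substitution, in which $a\geq1$ is precisely the threshold that keeps the factor $e^{-a^2t^2}$ from overpowering the heat kernel built into the Hermite basis.

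I expect the real obstacle to be exactly the place where monotonicity must be \emph{forced} --- the ``nondecreasing'' half of $(1)\Rightarrow(2)$ --- together with making the reduction in $(2)\Rightarrow(1)$ genuinely complete. A warning is built into the structure of the problem: since $V$ preserves hyperbolicity, \emph{every} classical multiplier sequence sends the family $\{H_n(x+y)\}$ into hyperbolic polynomials, so testing on that family cannot detect the failure of a non-nondecreasing sequence. One therefore needs a richer supply of hyperbolic test polynomials --- products $H_{n_1}(x+y_1)H_{n_2}(x+y_2)$, or low-degree perturbations such as $H_{m+1}(x)+c\,H_{m-1}(x)$ and $H_m(x)H_{m+1}(x)$ --- designed so that a single dip $\gamma_{m+1}<\gamma_m$ in the eigenvalues forces a pair of nearby real zeros to coalesce into a complex-conjugate pair; constructing such witnesses and controlling the attendant discriminant computations uniformly in $m$ is, I expect, where most of the work and most of the genuine ideas will lie.
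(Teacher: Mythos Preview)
The paper does not prove this theorem: it is quoted in the introduction as background, with attribution to Piotrowski's dissertation (Theorem~152 in \cite{andrzej}), and no argument is given. There is thus no ``paper's own proof'' to compare your proposal against.

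On the merits of the proposal itself: the operator framework is set up correctly (the identity $T_H=V\,T\,V^{-1}$ with $V=D_2\circ e^{-D^2}=e^{-D^2/4}\circ D_2$, and your observation that $V$ preserves hyperbolicity while $V^{-1}$ does not, are both right and are the right intuition for why the Hermite condition is strictly stronger than the classical one). The direction $(3)\Rightarrow(2)$ is essentially complete. But two genuine gaps remain. First, in $(2)\Rightarrow(3)$ your claim that $\sigma<1$ forces $\gamma_{n+1}/\gamma_n$ eventually below $1$ ``via Newton-type inequalities for the $a_i$'' is not an argument when $\psi$ is an infinite genus-zero product; the clean route is instead growth-theoretic: the canonical product has order $\le 1$ and minimal type, so $\varphi$ has exponential type exactly $\sigma$, whence $\limsup_n \gamma_n^{1/n}=\sigma$, while nondecreasing with $\gamma_m>0$ forces $\liminf_n\gamma_n^{1/n}\ge 1$, giving $\sigma\ge 1$. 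Second, and more seriously, the heart of the theorem --- deducing monotonicity in $(1)\Rightarrow(2)$ and carrying out the reduction in $(2)\Rightarrow(1)$ --- is, as you yourself say, not done here; your final paragraph correctly identifies the need for explicit hyperbolic test polynomials (of the type $H_{m}+c\,H_{m-2}$) that witness any dip $\gamma_{m+1}<\gamma_m$, and indeed the present paper's own monotonicity proof for the Laguerre case (Section~4.2, via $L_n^{(\alpha)}+bL_{n-2}^{(\alpha)}$ and the maximal admissible $b$) is exactly the template to follow. As it stands, the proposal is a coherent plan with some pieces in place, but not yet a proof.
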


In this paper we investigate a related problem, where we use the generalized Laguerre polynomials in place of the Hermite polynomials.  To any sequence of real numbers $\seq$, one can define a linear operator $T_L$ on $\mbb{R}[x]$ by declaring $T_L\left[L_n^{(\alpha)}(x)\right] = \gamma_n L_n^{(\alpha)}(x)$ for all $n$, where $L_n^{(\alpha)}(x)$ denotes the $n^{th}$ Laguerre polynomial $ \ds L_n^{(\alpha)}(x) = \sum_{k=0}^{n}\binom{n+\alpha}{n-k}\frac{(-x)^k}{k!}$ and $\alpha>-1$.  If the linear operator $T_L$ has the property that $T_L[p]$ has only real zeros whenever $p$ has only real zeros, then $\{\gamma_k\}_{k=0}^{\infty}$ is called an \textit{$L^{(\alpha)}$-multiplier sequence}.  We opted not use the terminology ``Laguerre multiplier sequence," as this phrase has been used by other authors with a different meaning (See, for example, \cite{CCsurvey}). 

In a similar way, one can define $Q$-multiplier sequences, where $\ds Q=\{q_k\}_{k=0}^{\infty}$ is any simple polynomial set (i.e., $\deg(q_k)=k$ for each $k$).  Remarkably, every $Q$-multiplier sequence must be a (classical) multiplier sequence, regardless of the choice of $Q$.  In particular, the following result guarantees that every $L^{(\alpha)}$-multiplier sequence must also be a multiplier sequence.
\begin{thm}\label{thmqmsms}\emph{(Piotrowski, Theorem 158 in \cite{andrzej})}
Let $Q = \{q_k\}_{k=0}^{\infty}$ be a simple set of polynomials.  If the sequence $\{ \gamma_k\}_{k=0}^{\infty}$ is a $Q$-multiplier sequence, then the sequence $\{ \gamma_k\}_{k=0}^{\infty}$ is a multiplier sequence.
\end{thm}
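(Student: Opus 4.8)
The plan is to compare the action of a $Q$-multiplier operator on powers of $x$ with its action on the polynomials $q_k$, using the fact that $Q$ is a simple set so that $\{q_0,\dots,q_n\}$ is a basis for the polynomials of degree at most $n$. Suppose $\{\gamma_k\}_{k=0}^\infty$ is a $Q$-multiplier sequence and let $T_Q$ be the associated operator, $T_Q[q_k]=\gamma_k q_k$. Let $T$ be the ``classical'' diagonal operator $T[x^k]=\gamma_k x^k$. The key observation is that one should be able to recover the effect of $T$ on an arbitrary real polynomial $p$ from the effect of $T_Q$ on a suitable perturbation of $p$, in a limiting sense.

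First I would fix a polynomial $p(x)=\sum_{k=0}^n a_k x^k$ with only real zeros and seek to show $T[p]$ has only real zeros. The natural device is a scaling/translation limit: for a parameter $t>0$, consider the rescaled polynomials $t^{-k}q_k(tx)$. Since $\deg q_k=k$, as $t\to\infty$ these behave like $c_k x^k$ where $c_k$ is the leading coefficient of $q_k$ (after normalizing, one may assume $c_k=1$, or carry the constants along). More precisely, I would examine $t^{-n}\,T_Q[p_t](tx)$ where $p_t$ is the (real-rooted, for large $t$) polynomial obtained by expanding $p(tx)$ — suitably rescaled — in the basis $Q$; the reality of the zeros of $p(tx)$ for real-rooted $p$ is immediate, and $T_Q$ preserves real-rootedness by hypothesis. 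The claim is that, as $t\to\infty$, these polynomials converge coefficientwise to $T[p](x)$.

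The engine behind the limit is the following: write $p(tx)=\sum_{j} b_j(t)\,q_j(x)$; because $Q$ is a simple set, the change-of-basis is triangular, and one checks that $b_j(t)$ is a polynomial in $t$ whose top-degree behavior is governed by $a_j t^j$ (times the appropriate leading-coefficient constants). Applying $T_Q$ multiplies $b_j(t)$ by $\gamma_j$, and converting back to the power basis, the coefficient of $x^k$ in $t^{-k}T_Q[p_t](tx)$ tends to $\gamma_k a_k$ as $t\to\infty$. Hurwitz's theorem on the continuity of zeros (a limit of polynomials with only real zeros, with nonvanishing leading coefficient, again has only real zeros) then forces $T[p]=\sum_k \gamma_k a_k x^k$ to have only real zeros, which is exactly the statement that $\{\gamma_k\}$ is a multiplier sequence. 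One should also handle the degenerate case where the limiting polynomial drops degree (some $\gamma_k a_k\to 0$ relative to a lower term), but this is standard: pass to a subsequence or factor out the appropriate power, as in the usual proofs of closure properties of multiplier sequences.

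The main obstacle I anticipate is controlling the change-of-basis coefficients $b_j(t)$ precisely enough to guarantee coefficientwise convergence after the correct normalization — in particular, ensuring that the leading-order terms do not conspire to cancel and that no ``unexpected'' growth in $t$ appears from the lower-triangular part of the transition matrix. This is really a bookkeeping argument about the triangular structure of a simple polynomial set, but it must be done carefully because the conclusion $\gamma_k a_k$ for the $x^k$-coefficient depends on isolating exactly the $t^k$-term of $b_k(t)$ and showing all cross-contributions are of lower order after dividing by $t^k$. Once that normalization lemma is in place, the rest is an appeal to Hurwitz's theorem together with the elementary observation that $p$ real-rooted implies $p(tx)$ real-rooted for $t>0$.
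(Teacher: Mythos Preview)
The paper does not supply its own proof of this theorem; it is quoted from Piotrowski's dissertation and used as a black box. So there is no in-paper argument to compare against. I will therefore just assess your proposal on its own terms.

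Your overall strategy --- a dilation/limiting argument together with Hurwitz --- is the right one and does lead to a proof. However, the concrete computation you sketch contains a genuine error. With $p(tx)=\sum_{j} b_j(t)\,q_j(x)$ and $x^k=\sum_{j\le k} d_{kj}q_j(x)$, one has
\[
b_j(t)=\sum_{k=j}^{n} a_k\,d_{kj}\,t^{k},
\]
so the \emph{leading} power of $t$ in $b_j(t)$ is $t^{n}$ (with coefficient $a_n d_{nj}$), not $t^{j}$ as you assert. In particular, for $t\to\infty$ all of the $b_j(t)$ grow at the same rate $t^n$, and after substituting $tx$ and dividing by any fixed power of $t$ you do not recover $\gamma_k a_k$ as the coefficient of $x^k$; the ``cross terms'' you worry about are not lower order in this direction, they are dominant.

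The fix is simply to reverse the dilation. Set $p_t(x)=p(x/t)$, which is still real-rooted, and look at $\bigl(T_Q[p_t]\bigr)(tx)$ as $t\to\infty$. Writing $p(x/t)=\sum_j \widetilde b_j(t)\,q_j(x)$ one gets $\widetilde b_j(t)=\sum_{k\ge j} a_k d_{kj} t^{-k}$, whose dominant term as $t\to\infty$ really is $a_j d_{jj}t^{-j}$; a short triangular computation then shows the coefficient of $x^m$ in $\bigl(T_Q[p_t]\bigr)(tx)$ tends to $\gamma_m a_m$, and Hurwitz finishes the job. Equivalently, and perhaps more transparently, note that conjugating $T_Q$ by the dilation $S_t\colon f(x)\mapsto f(tx)$ gives the diagonal operator for the rescaled simple set $\{t^{-k}q_k(tx)\}_k$, which is still reality-preserving, and these rescaled bases converge to $\{c_k x^k\}_k$ as $t\to\infty$. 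Your ``main obstacle'' (the bookkeeping on the triangular change of basis) is then routine once the direction of the limit is chosen correctly.
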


In general, we will say that an operator $T$ \textit{preserves reality of zeros} if it has the property that $T[p]$ has only real zeros whenever $p$ has only real zeros.  Thus, a sequence is a multiplier sequence if its corresponding operator preserves reality of zeros.  Very recently, Borcea and Br\"and\'en gave a complete characterization of stability preserving operators.  A special case of their result is a  characterization of linear operators which preserve reality of zeros.  
\begin{thm}\label{BBthm}\emph{(Borcea-Br\"and\'en, Theorem 5 in \cite{BB})}
A linear operator $T:\R[x]\to \R[x]$ preserves reality of zeros if and only if either
\begin{enumerate}
\item{$T$ has range of dimension at most two and is of the form $T[f] = \alpha(f)P+ \beta(f)Q$ where $\alpha$ and $\beta$ are linear functionals on $\R[x]$ and $P$ and $Q$ are polynomials with only real interlacing zeros.}
\item{$\ds T[\exp(-xw)] = \sum_{n=0}^{\infty} \frac{(-w)^n T[x^n]}{n!}\in \overline{A}$, or }
\item{$\ds T[\exp(xw)] = \sum_{n=0}^{\infty} \frac{w^n T[x^n]}{n!}\in \overline{A}$,}
\end{enumerate}
where $\overline{A}$ denotes the set of entire functions in 2 variables that are limits, uniformly on compact subsets, of polynomials in the set 
$$
A  = \{f\in\R[x,w] \,\big|\,  f(x,w) \neq 0 \emph{ whenever } \emph{Im }x>0 \emph{ and } \emph{Im }w >0  \}.
$$
\end{thm}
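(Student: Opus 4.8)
This is a substantial theorem, and the plan is to follow the circle of ideas in \cite{BB}. The starting point is that, for a \emph{real} polynomial, having only real zeros is equivalent to being nonvanishing on the open upper half-plane $\h$ (the non-real zeros come in conjugate pairs); thus ``preserves reality of zeros'' coincides with ``preserves $\h$-stability,'' and the theorem is the univariate real instance of a stability-preserver characterization. The object to track throughout is the two-variable \emph{symbol}
\[
G_T(x,w):=T[e^{xw}]=\sum_{n\ge0}\frac{w^n}{n!}\,T[x^n],
\]
together with the identity $T[p](x)=\bigl(p(\partial_w)G_T(x,w)\bigr)\big|_{w=0}$, which one checks on monomials.

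First I would do sufficiency. If $P,Q$ have only real interlacing zeros, the Hermite--Kakeya--Obreschkoff theorem gives that $aP+bQ$ is hyperbolic for \emph{every} $(a,b)\in\R^2$; taking $(a,b)=(\alpha(p),\beta(p))$ yields case (1). For (2)--(3), assume $G_T\in\overline A$ (the two cases differing by the reflection $w\mapsto-w$). Given a hyperbolic $p$ of degree $n$, the Grace--Walsh--Szeg\H{o} coincidence theorem lets one polarize $p$ into a symmetric multiaffine real stable form and thereby realize $T[p]$ as a specialization of $G_T$ paired against that form; the standard closure properties of real stable polynomials --- under partial differentiation, under setting a variable to a real value, under multiplication, and under locally uniform limits (Hurwitz) --- then force $T[p]$ to be hyperbolic or identically zero.

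For necessity, suppose $T$ preserves reality of zeros. The bootstrap step is that $T$ preserves interlacing up to orientation: if $u,v$ interlace then $u+tv$ is hyperbolic for all real $t$, so $T[u]+tT[v]=T[u+tv]$ is hyperbolic for all $t$, and the converse Hermite--Kakeya--Obreschkoff theorem returns an interlacing pair $T[u],T[v]$, possibly with the opposite orientation --- and which of these two alternatives occurs is exactly what separates cases (2) and (3). Through the Hermite--Biehler theorem (a complex polynomial is $\h$-stable precisely when its real and imaginary parts interlace with a fixed orientation) together with the slicewise characterization of real stability, this upgrades to the statement that the extension of $T$ acting on one variable of $\R[x,w]$ either preserves real stability of bivariate polynomials or sends it to the reflected class. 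Now split on $\dim(\operatorname{range}T)$. If it is at most $2$, the images under $T$ of the hyperbolic polynomials fill out a large enough family of hyperbolic polynomials inside $\operatorname{span}\{P,Q\}$ that the converse HKO theorem forces $P,Q$ to interlace, which is case (1). If it is at least $3$, feed $T$ the fixed-degree bivariate stable polynomials $(x\pm w)^n$ (the sign chosen according to whether the bootstrap preserves or reverses orientation): the fixed-degree symbols $\sum_k\binom{n}{k}(\pm w)^{n-k}T[x^k]$ then lie in $A$, and a limiting argument carrying the fixed-degree symbols to the entire symbol $\sum_k\frac{(\pm w)^k}{k!}T[x^k]=T[e^{\pm xw}]$ places it in $\overline A$ --- case (2) or (3).

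I expect the main obstacle to be the necessity direction when $\dim(\operatorname{range}T)\ge3$: establishing the bootstrap step correctly --- and in particular showing that the orientation behaviour is globally consistent for a given $T$, which is what genuinely produces the dichotomy between (2) and (3) --- and then drawing the precise boundary between the low-rank regime that yields (1) and the regime that yields (2)--(3), since the exact description of the two-dimensional exceptional family is easy to get wrong. The sufficiency direction, once the Grace--Walsh--Szeg\H{o} calculus and the closure properties of real stable polynomials are available, is comparatively routine.
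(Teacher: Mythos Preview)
The paper does not prove this theorem at all: it is quoted verbatim as Theorem~5 of Borcea--Br\"and\'en \cite{BB} and used as a black box. There is therefore no ``paper's own proof'' to compare your sketch against.

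That said, your outline is a faithful summary of the Borcea--Br\"and\'en strategy in \cite{BB}: the identification of real-zero preservation with $\h$-stability preservation, the symbol $G_T(x,w)=T[e^{xw}]$, Grace--Walsh--Szeg\H{o} polarization and closure properties for sufficiency, and the Hermite--Kakeya--Obreschkoff/Hermite--Biehler bootstrap for necessity. Your own caveat is accurate: the global consistency of the orientation (which separates (2) from (3)) and the precise handling of the low-rank case are the genuinely delicate points, and your sketch does not resolve them---it only names them. If you intend this as a standalone proof rather than a pointer to \cite{BB}, those two items would need to be filled in; as a reference to the literature, the paper's bare citation is the appropriate choice.
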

With this characterization at hand, the crux of our problem is to find necessary and sufficient conditions on a sequence of real numbers under which the corresponding operator $T_L$ satisfies one of the conditions (1)-(3) above.  This task is quite difficult for a generic sequence, and as such we have not yet arrived at a complete characterization of 
$L^{(\alpha)}$-multiplier sequences. 
\newline \indent Finally, we note that throughout the paper we adopt the following convention: to avoid trivialities, we consider the identically zero function $f \equiv 0$ to have only real zeros, although this is clearly not the case.

\section{Trivial, Geometric, and Linear Sequences}

It is well known that the generalized Laguerre polynomials form an orthogonal set over the positive real axis with respect to the weight function $x^{\alpha}e^{-x}$ (recall that in this paper we are only considering  $\alpha>-1$).  Orthogonal polynomials have only simple real zeros. Furthermore, the zeros of consecutive polynomials in the sequence are interlacing.  As a consequence of this, it is easy to verify that any linear combination $aL_n^{(\alpha)}(x) + bL_{n+1}^{(\alpha)}(x)$ has only real zeros (one only needs to consider the sign of the individual terms and count zeros with the aid of the Mean Value Theorem).  We thus have the following fact:

\begin{prop}\label{trivial} Given $\gamma_{n}, \gamma_{n+1} \in \mathbb{R}$, any sequence of the form $( 0,0,\ldots, 0,0, \gamma_{n}, \gamma_{n+1}, 0,0,\ldots )$ is an $L^{(\alpha)}$-multiplier sequence.
\end{prop}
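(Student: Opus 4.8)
The plan is to exploit the fact that the operator attached to such a sequence has rank at most two, and then to reduce the claim to the interlacing of zeros of consecutive generalized Laguerre polynomials (or, alternatively, to invoke Theorem~\ref{BBthm}(1) directly). Since $\{L_k^{(\alpha)}(x)\}_{k=0}^\infty$ is a simple set, it is a basis of $\R[x]$, so every $p\in\R[x]$ has a unique expansion $p = \sum_k c_k L_k^{(\alpha)}(x)$. Applying the operator $T_L$ associated to the sequence $(0,\ldots,0,\gamma_n,\gamma_{n+1},0,\ldots)$ gives
\[
T_L[p] = \gamma_n c_n L_n^{(\alpha)}(x) + \gamma_{n+1} c_{n+1} L_{n+1}^{(\alpha)}(x),
\]
so the range of $T_L$ lies in the span of $L_n^{(\alpha)}$ and $L_{n+1}^{(\alpha)}$, regardless of whether $p$ has only real zeros. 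Hence it suffices to show that every linear combination $a L_n^{(\alpha)}(x) + b L_{n+1}^{(\alpha)}(x)$ has only real zeros.

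Next I would handle the degenerate cases: if $a=b=0$ the combination is identically zero (which we have agreed to regard as having only real zeros), and if exactly one of $a,b$ is zero the combination is a nonzero scalar multiple of a single orthogonal polynomial, hence has only (simple) real zeros. For the remaining case $ab\neq 0$, I would use that the zeros of $L_n^{(\alpha)}$ and $L_{n+1}^{(\alpha)}$ strictly interlace (a classical fact for any family of orthogonal polynomials): if $y_0<y_1<\cdots<y_n$ are the zeros of $L_{n+1}^{(\alpha)}$, then $L_n^{(\alpha)}$ has exactly one zero in each open interval $(y_{i-1},y_i)$, so the numbers $L_n^{(\alpha)}(y_0),\ldots,L_n^{(\alpha)}(y_n)$ alternate in sign. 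Therefore the values of $aL_n^{(\alpha)}+bL_{n+1}^{(\alpha)}$ at the points $y_0,\ldots,y_n$, which equal $aL_n^{(\alpha)}(y_i)$, alternate in sign as well, producing at least $n$ distinct zeros in $(y_0,y_n)$. Since $aL_n^{(\alpha)}+bL_{n+1}^{(\alpha)}$ is a real polynomial of degree exactly $n+1$ and its non-real zeros would occur in conjugate pairs, the one remaining zero (in multiplicity) must also be real, so all its zeros are real. As an alternative, shorter route, one may note that $T_L[p]=\alpha(p)L_n^{(\alpha)}+\beta(p)L_{n+1}^{(\alpha)}$ with $\alpha(p)=\gamma_n c_n$ and $\beta(p)=\gamma_{n+1}c_{n+1}$ linear functionals, so Theorem~\ref{BBthm}(1) applies the moment we know $L_n^{(\alpha)}$ and $L_{n+1}^{(\alpha)}$ have real interlacing zeros.

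I do not expect any real obstacle here; the only substantive input is the interlacing of the zeros of consecutive generalized Laguerre polynomials, and the rest is bookkeeping. The one point to be careful about is the sign-counting in the case $ab\neq 0$ together with the parity argument that upgrades "at least $n$ real zeros" to "all $n+1$ zeros real," and making sure the degenerate cases (including the identically zero output) are explicitly covered.
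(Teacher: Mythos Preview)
Your proposal is correct and follows essentially the same route as the paper: the paper also reduces to showing that every linear combination $aL_n^{(\alpha)}(x)+bL_{n+1}^{(\alpha)}(x)$ has only real zeros and justifies this via the interlacing of zeros of consecutive orthogonal polynomials together with a sign-counting argument. Your write-up is simply more explicit about the degenerate cases and the parity step, and your alternative appeal to Theorem~\ref{BBthm}(1) is a valid shortcut the paper does not invoke here.
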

We will call sequences of the above form {\it trivial $L^{(\alpha)}$-multiplier sequences}. Unless stated otherwise, in what follows we only consider nontrivial $L^{(\alpha)}$-multiplier sequences. 

\subsection{Geometric $L^{(\alpha)}$-Multiplier Sequences}
We now consider the geometric sequences $\{ r^k \}_{k=0}^{\infty}$, $r \in \mathbb{R}$.  These sequences are (classical) multiplier sequences for all nonzero $r$ and are Hermite multiplier sequences if and only if $|r|\geq 1$. In contrast with these results, the only geometric sequence which is an $L^{(\alpha)}$-multiplier sequence is the constant sequence $\{1\}_{k=0}^{\infty}$.

\begin{prop} \label{r^k} The sequence $\ds \{ r^k \}_{k=0}^{\infty}$ is an $L^{(\alpha)}$-multiplier sequence if and only if $r=1$.
\end{prop}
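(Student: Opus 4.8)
The plan is as follows. The ``if'' direction is immediate: for $r=1$ the operator $T_L$ is the identity on $\R[x]$, which preserves reality of zeros. (The value $r=0$ falls outside our standing convention, since $\{0^k\}_{k=0}^{\infty}=(1,0,0,\dots)$ is a trivial sequence in the sense of Proposition~\ref{trivial}; so in proving the converse we may assume $r\ne 0$.) For the converse we invoke the Borcea--Br\"and\'en criterion, Theorem~\ref{BBthm}. Because every $\gamma_k=r^k$ is nonzero and the polynomials $L_k^{(\alpha)}$ are linearly independent, the range of $T_L$ is infinite dimensional, so alternative~(1) of Theorem~\ref{BBthm} cannot occur; hence, if $\{r^k\}_{k=0}^{\infty}$ is an $L^{(\alpha)}$-multiplier sequence, then $T_L[\exp(xw)]$ or $T_L[\exp(-xw)]$ must lie in $\overline{A}$, and in particular at least one of these two series must represent an \emph{entire} function of $(x,w)$. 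The heart of the argument is to compute these series explicitly and to see that neither is entire unless $r=1$.

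To do so, I would first invert the defining expansion of $L_n^{(\alpha)}$ to obtain the standard identity
\[
x^n=n!\sum_{k=0}^{n}(-1)^k\binom{n+\alpha}{n-k}L_k^{(\alpha)}(x),
\]
apply $T_L$ term by term to get $T_L[x^n]=n!\sum_{k=0}^{n}(-1)^k\binom{n+\alpha}{n-k}r^kL_k^{(\alpha)}(x)$, and then, after interchanging the order of summation (legitimate for $(x,w)$ near the origin), write
\[
T_L[\exp(xw)]=\sum_{k=0}^{\infty}(-1)^kr^kL_k^{(\alpha)}(x)\sum_{n=k}^{\infty}\binom{n+\alpha}{n-k}w^n .
\]
Evaluating the inner sum by $\sum_{m\ge 0}\binom{m+\beta}{m}t^m=(1-t)^{-\beta-1}$ and then collapsing the $k$-sum with the Laguerre generating function $\sum_{k\ge 0}L_k^{(\alpha)}(x)t^k=(1-t)^{-\alpha-1}\exp\left(\frac{-xt}{1-t}\right)$ at $t=-rw/(1-w)$, one should arrive, after simplification, at
\[
T_L[\exp(xw)]=\frac{1}{\bigl(1-(1-r)w\bigr)^{\alpha+1}}\exp\left(\frac{rwx}{1-(1-r)w}\right),
\]
valid in a neighbourhood of the origin, together with the analogous formula $T_L[\exp(-xw)]=\bigl(1+(1-r)w\bigr)^{-\alpha-1}\exp\bigl(-rwx/(1+(1-r)w)\bigr)$.

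The conclusion then drops out: if $r\ne 1$, then since $\alpha+1>0$ the function $w\mapsto\bigl(1-(1-r)w\bigr)^{-\alpha-1}$ has a genuine singularity at $w=1/(1-r)$, so $T_L[\exp(xw)]$ does not extend to an entire function of $w$; similarly $T_L[\exp(-xw)]$ is singular at $w=1/(r-1)$. Hence neither series belongs to $\overline{A}$, contradicting Theorem~\ref{BBthm}. Therefore $r=1$.

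The only point requiring care is the justification of the rearrangement and of the closed form; this is routine once one notes that $\limsup_k|L_k^{(\alpha)}(x)|^{1/k}\le 1$ for each fixed $x$ (again a consequence of the generating function), so that the double series converges absolutely near the origin. The genuinely creative step is finding the closed form; after that the pole makes the argument end in a single line. I do not see a cleaner elementary substitute, as testing $T_L$ on a fixed low-degree polynomial appears to fail: for instance $T_L[x^2]$ has discriminant $4r^2(1-r)^2(2+\alpha)$, which is nonnegative for every real $r$ (recall $\alpha>-1$), so quadratic tests carry no information and one is pushed toward the transform approach above.
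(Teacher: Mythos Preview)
Your argument is correct. The closed forms
\[
T_L[\exp(\pm xw)]=\bigl(1\mp(1-r)w\bigr)^{-\alpha-1}\exp\!\left(\frac{\pm rwx}{1\mp(1-r)w}\right)
\]
are computed accurately, and since $\alpha+1>0$ neither function extends to an entire function of $w$ (already at $x=0$) unless $r=1$. Together with the easy exclusion of alternative~(1), Theorem~\ref{BBthm} then forces $r=1$. The convergence and rearrangement issues you flag are indeed routine.

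Your route, however, is quite different from the paper's. The paper gives a two-line elementary test: apply $T_L$ to the family $(x+b)^2$ and compute the discriminant of the resulting quadratic, obtaining $\Delta=-4r^2(r-1)\bigl((2+\alpha)(1-r)+2b\bigr)$; for $r\ne 1$ one simply lets $b\to\pm\infty$ (with sign depending on whether $r>1$ or $r<1$) to make $\Delta<0$. So your closing remark is a little off the mark: a single fixed quadratic such as $x^2$ does fail, exactly as you observed, but a one-parameter family of quadratics succeeds immediately and avoids the Borcea--Br\"and\'en machinery altogether. What your approach buys is an explicit closed form for $T_L[\exp(\pm xw)]$, which could be reused to analyze other sequences; what the paper's approach buys is brevity and independence from Theorem~\ref{BBthm}.
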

\begin{proof} Consider the polynomial $p(x)=(x+b)^2$ for $b \in \R$. We can write $p(x)$ as
\[
p(x)=2 L_2^{(\alpha)}(x)-2(\alpha+2+b) L_1^{(\alpha)}(x)+(\alpha+b)^2+3\alpha+2b+2.
\] 
Applying the sequence $\{ r^k\}_{k=0}^{\infty}$ and then expanding in terms of the standard basis we obtain the polynomial
\begin{eqnarray*}
\bar p(x)&=&r^2 x^2+(2(\alpha+2+b)r-(2\alpha+4)r^2)x\\
&+&2+\alpha^2+2b+b^2+\alpha(3+2b)-2(2+\alpha+b)(1+\alpha)r+r^2(2+3\alpha+\alpha^2),
\end{eqnarray*}
with discriminant 
\[
\Delta=-4 r^2 (r-1)((2+\alpha)(1-r)+2b).
\]
From this representation we immediately see that $(i)$ if $r=1$ the discriminant is equal to zero and $(ii)$ large positive values (if $r >1$) or large negative values (if $r<1$) of $b$ result in a negative discriminant. This establishes the claim.  
\end{proof}

\subsection{Linear $L^{(\alpha)}$-Multiplier Sequences}
In \cite{andrzej} it is shown that for the simple Laguerre polynomials ($\alpha=0$) the sequence $\{a+k \}_{k=0}^{\infty}$ is not an $L^{(0)}$-multiplier sequence for $a >1 $ and $a < 0$ but it is an $L^{(0)}$-multiplier sequence for $a=1$ and $a=0$. The question whether $\{a+k\}_{k=0}^{\infty}$ is an $L^{(0)}$-multiplier sequence for $0< a < 1$ is left open. In this section we answer this question and completely characterize linear $L^{(\alpha)}$-multiplier sequences. 

\begin{lem} \label{a_alpha_1} $\{k+a\}_{k=0}^{\infty}$ is not an $L^{(\alpha)}$-multiplier sequence for any $\alpha$ if $a<0$.
\end{lem}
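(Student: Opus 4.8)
The plan is to exhibit, for each $\alpha>-1$ and each $a<0$, a polynomial $p$ with only real zeros such that $T_L[p]$ has a nonreal zero. The natural candidate is a low-degree polynomial, since the action of $T_L$ on $L_0^{(\alpha)},L_1^{(\alpha)},L_2^{(\alpha)}$ is governed by the multipliers $\gamma_0=a$, $\gamma_1=a+1$, $\gamma_2=a+2$, and the sign change of $\gamma_0$ (for $a<0$) relative to $\gamma_1,\gamma_2$ is exactly the obstruction we want to exploit. Concretely, I would take $p(x)=(x-b)^2$ (or equivalently a shifted square) for a real parameter $b$ to be chosen, expand it in the Laguerre basis $p = c_2 L_2^{(\alpha)} + c_1 L_1^{(\alpha)} + c_0 L_0^{(\alpha)}$ with coefficients $c_i=c_i(b,\alpha)$, apply $T_L$ to get $\bar p = a\,c_0 L_0^{(\alpha)} + (a+1)c_1 L_1^{(\alpha)} + (a+2)c_2 L_2^{(\alpha)}$, re-expand in the standard monomial basis, and compute the discriminant $\Delta(b,\alpha,a)$ of the resulting quadratic.

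The key steps, in order: (1) record the change-of-basis formulas expressing $1,x,x^2$ in terms of $L_0^{(\alpha)},L_1^{(\alpha)},L_2^{(\alpha)}$ (these are elementary from the explicit formula for $L_n^{(\alpha)}$ given in the introduction, and the $(x+b)^2$ case is already carried out in the proof of Proposition~\ref{r^k}); (2) apply the sequence $\{k+a\}$ termwise and collect $\bar p(x)$ as a quadratic $A(b)x^2 + B(b)x + C(b)$ with coefficients depending affinely or quadratically on $b$; (3) form the discriminant $\Delta = B^2 - 4AC$ and view it as a polynomial in $b$; (4) show that for $a<0$ one can choose $b$ making $\Delta<0$. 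For step (4) the cleanest route is to examine the leading behavior of $\Delta$ as $b\to+\infty$ or $b\to-\infty$: since the leading coefficient $A = \gamma_2 c_2$ is a fixed nonzero multiple of $a+2$ and the $b$-dependence enters through $c_1,c_0$, the discriminant should grow like a negative constant times a power of $b$ precisely when $a<0$, because the "$a$" appears as a factor (as in the $\Delta = -4r^2(r-1)(\cdots)$ factorization for geometric sequences). I expect $\Delta$ to factor with $a$ (or $a$ times something that stays positive for large $|b|$) as one factor, so that the sign of $\Delta$ for large $|b|$ is the sign of $a$, and $a<0$ forces a nonreal zero.

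The main obstacle is step (4): unlike the geometric case, where the discriminant factored transparently as $-4r^2(r-1)((2+\alpha)(1-r)+2b)$, here the discriminant of $\bar p$ for $p=(x+b)^2$ under the linear sequence need not factor so cleanly, and I may need to track an $\alpha$-dependent constant carefully to be sure it does not vanish or change sign for the relevant range of $b$. If the square $(x+b)^2$ does not work uniformly in $\alpha$, the fallback is to use a slightly more flexible test polynomial — e.g. $(x+b)^2$ composed with the observation (from the proof of Proposition~\ref{r^k}) that such polynomials already live in the span of $L_0^{(\alpha)},L_1^{(\alpha)},L_2^{(\alpha)}$ — or to use two distinct real roots $p(x)=(x-b_1)(x-b_2)$ and push one root to infinity. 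In all cases the argument is a finite explicit computation followed by a limiting/sign analysis in the free real parameter, so the only real work is bookkeeping; the conceptual point is simply that a negative leading multiplier $\gamma_0=a<0$ sitting against positive $\gamma_1,\gamma_2$ destroys reality of zeros on a suitable perfect square.
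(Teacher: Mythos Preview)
Your primary test polynomial does not work. Using the operator $T=a+(x-\alpha-1)D-xD^2$ (as in the proof of Lemma~\ref{a_alpha_2}), one computes
\[
T[(x+b)^2]=(a+2)x^2+2\bigl((a+1)b-(\alpha+2)\bigr)x+b\bigl(ab-2(\alpha+1)\bigr),
\]
whose discriminant simplifies to
\[
\Delta=4\Bigl(b^2+2(\alpha-a)\,b+(\alpha+2)^2\Bigr).
\]
This is a quadratic in $b$ with leading coefficient $1$, so $\Delta<0$ has a real solution in $b$ only when its own discriminant, $-16(a+2)\bigl(2(\alpha+1)-a\bigr)$, is positive. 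For $\alpha>-1$ and $a<0$ the factor $2(\alpha+1)-a$ is automatically positive, so you need $a+2<0$. Hence for every $a$ with $-2\le a<0$ and every real $b$, the polynomial $T[(x+b)^2]$ has only real zeros; your stated expectation that $\Delta$ carries a factor of $a$ governing its sign for large $|b|$ is false, since in fact $\Delta\sim 4b^2\to+\infty$.

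Your fallback $(x-b_1)(x-b_2)$ can rescue the range $-2<a<0$ (for instance $p(x)=x^2-M^2$ gives $\Delta=4(\alpha+2)^2+4a(a+2)M^2\to-\infty$), but you framed this as a contingency against $\alpha$-dependence rather than recognizing it is genuinely required, and the degenerate case $a=-2$ (where $T[p]$ drops to degree one) still needs separate treatment. The paper bypasses all of this: it simply invokes Theorem~\ref{thmqmsms} to conclude that every $L^{(\alpha)}$-multiplier sequence is already a classical multiplier sequence, and then notes that $\{a+k\}_{k=0}^\infty$ is not a classical multiplier sequence for $a<0$. That two-line argument is the intended proof; your computational route, even once patched, is considerably more work.
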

\begin{proof} The set $\left\{ L_k^{(\alpha)}(x) \right\}_{k=0}^{\infty}$ is a simple set of polynomials. Thus, by Theorem \ref{thmqmsms}, any sequence of real numbers $\{ \gamma_k \}_{k=0}^{\infty}$ that is an $L^{(\alpha)}$-multiplier sequence is a (classical) multiplier sequence. Since $\{ a+k\}_{k=0}^{\infty}$ is not a (classical) multiplier sequence for $a<0$ the result follows. 
\end{proof}

\begin{lem} \label{a_alpha_2} $\{k+a\}_{k=0}^{\infty}$ is not an $L^{(\alpha)}$-multiplier sequence if $a > \alpha+1$.
\end{lem}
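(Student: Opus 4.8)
The plan is to represent the operator $T_L$ associated with $\{k+a\}_{k=0}^{\infty}$ as a second-order differential operator and then apply it to a suitable power of a linear polynomial. Since $L_n^{(\alpha)}$ satisfies the Laguerre differential equation $x y'' + (\alpha+1-x)y' + n y = 0$, the operator $M := -x D^2 + (x-\alpha-1)D$ (with $D = d/dx$) satisfies $M\big[L_n^{(\alpha)}\big] = n\,L_n^{(\alpha)}$ for every $n$; because $\big\{L_n^{(\alpha)}\big\}_{n=0}^{\infty}$ is a basis of $\R[x]$, this forces
\[
T_L[p] = a\,p - x\,p'' + (x-\alpha-1)\,p' \qquad (p\in\R[x]).
\]

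Next I would apply this to the test polynomial $p(x) = (x+b)^n$, $b\in\R$. Writing $u = x+b$ and collecting terms, one obtains the factorization
\[
T_L\big[(x+b)^n\big] = (x+b)^{n-2}\Big[(a+n)(x+b)^2 - n(b+\alpha+n)(x+b) + n(n-1)b\Big].
\]
The prefactor $(x+b)^{n-2}$ contributes only the real zero $x=-b$, and the bracketed quadratic has leading coefficient $a+n>0$, so whether $T_L\big[(x+b)^n\big]$ has a non-real zero is decided entirely by the sign of the discriminant of that quadratic, namely $n^2(b+\alpha+n)^2 - 4n(n-1)(a+n)b$.

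I would then read the condition ``this discriminant is negative'' as a constraint on $b$: it becomes $n b^2 + \big[2n(\alpha+n) - 4(n-1)(a+n)\big]b + n(\alpha+n)^2 < 0$, an upward-opening quadratic in $b$, which admits a solution $b\in\R$ exactly when its own discriminant is positive; expanding and factoring that quantity as a difference of squares shows this happens precisely when $n(\alpha+n) < (n-1)(a+n)$, i.e. when $a > \tfrac{n}{n-1}(\alpha+1)$. Since $a>\alpha+1$ we have $\tfrac{a}{\alpha+1}>1$, so we may pick an integer $n>\tfrac{a}{a-\alpha-1}$; because $\alpha>-1$ this forces $n\ge 2$, and it makes $\tfrac{n}{n-1}<\tfrac{a}{\alpha+1}$. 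For such $n$ the quadratic in $b$ has two real roots, and choosing $b$ strictly between them makes the bracketed quadratic factor above have a pair of non-real complex-conjugate zeros. Thus $T_L\big[(x+b)^n\big]$ has non-real zeros while $(x+b)^n$ does not, so $T_L$ does not preserve reality of zeros and $\{k+a\}_{k=0}^{\infty}$ is not an $L^{(\alpha)}$-multiplier sequence. The computations involved — the displayed factorization and the two discriminant sign-analyses — are routine; the one genuinely new point, and the step I expect to matter most, is that the degree $n$ of the test polynomial must be allowed to grow as $a\to(\alpha+1)^+$, since no fixed degree detects the full range $a>\alpha+1$ (for instance the quadratic $(x+b)^2$ only works when $a>2(\alpha+1)$).
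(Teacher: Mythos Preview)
Your argument is correct and follows essentially the same route as the paper: represent $T_L$ via the Laguerre differential equation as $a + (x-\alpha-1)D - xD^2$, apply it to a power $(x+b)^n$, factor out $(x+b)^{n-2}$, and show the remaining quadratic has negative discriminant for suitable $n$. The only difference is that the paper commits immediately to the choice $b=n$, which collapses your two-stage discriminant analysis into a single linear-in-$n$ condition $\Delta(n)=n^2\big[\alpha^2+4a-4n(a-\alpha-1)\big]<0$; your version with $b$ free recovers this (indeed, setting $b=n$ in your expression reproduces the paper's discriminant exactly), at the cost of an extra layer of computation.
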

\begin{proof} We recall that the polynomials $L_n^{(\alpha)}(x)$ satisfy the following ordinary differential equation:
\begin{equation} \label{LaguerreODE}
nL_n^{(\alpha)}(x)=(x-\alpha-1)L_n^{(\alpha)'}(x)-xL_n^{(\alpha)''}(x).
\end{equation}
It follows that 
\[
(a+k)L_k^{(\alpha)}(x)=aL_k^{(\alpha)}(x)+(x-\alpha-1)L_k^{(\alpha)'}(x)-xL_k^{(\alpha)''}(x).
\]
Thus the action of the sequence $\{ a+k\}_{k=0}^{\infty}$ on a polynomial is represented by the operator
\begin{equation}
\label{T}
T:=a+(x-\alpha-1)D-xD^2.
\end{equation}
Consider now the polynomial $(x+n)^n$, which clearly has only real zeros.  We have
\begin{eqnarray*}
T[(x+n)^n]&=&a(x+n)^n+(x-\alpha-1)n(x+n)^{n-1}-xn(n-1)(x+n)^{n-2}\\
&=&(x+n)^{n-2}[ a(x+n)^2+(x-(\alpha+1))n(x+n)-x(n^2-n)]\\
&=&(x+n)^{n-2}[x^2(a+n)+x(2an-n\alpha)+an^2-n^2(\alpha+1)].
\end{eqnarray*}
Calculating the discriminant of the polynomial in the square brackets we get
\begin{eqnarray*}
\Delta(n)&=&n^2[4a^2-4a\alpha+\alpha^2-4(a+n)(a-(\alpha+1))]\\
&=&n^2[\alpha^2+4a-4n(a-(\alpha+1))].
\end{eqnarray*}
It follows that if $a>(\alpha+1)$ then $\Delta(n) < 0$ for $n$ sufficiently large. Therefore $T[(x+n)^n]$ will have non-real zeros for large enough $n$. This completes the proof.
\end{proof}

\begin{lem} \label{a_alpha_3}
If $0\leq a\leq \alpha+1$, then $\{k+a\}_{k=0}^{\infty}$ is an $L^{(\alpha)}$-multiplier sequence. In particular, if $0 \leq a \leq 1$, then $\{k+a\}_{k=0}^{\infty}$ is an $L$-multiplier sequence.
\end{lem}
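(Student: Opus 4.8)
The plan is to invoke the Borcea--Br\"and\'en characterization, Theorem~\ref{BBthm}. By the computation in the proof of Lemma~\ref{a_alpha_2}, the sequence $\{k+a\}_{k=0}^{\infty}$ acts on $\R[x]$ as the operator $T = a + (x-\alpha-1)D - xD^2$. This $T$ does not have range of dimension at most two (for instance $T[L_k^{(\alpha)}] = (a+k)L_k^{(\alpha)}$, so its range contains $L_k^{(\alpha)}$ for all $k \neq -a$), so case~(1) of Theorem~\ref{BBthm} does not apply and it suffices to verify condition~(2), namely that $T[\exp(-xw)] \in \overline{A}$. Condition~(2) rather than~(3) is the one to check because, as will be seen, it is $e^{-xw}$ — and not $e^{xw}$ — that lies in $\overline{A}$.

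Using $D_x e^{-xw} = -w\,e^{-xw}$ and $D_x^2 e^{-xw} = w^2 e^{-xw}$, a short computation gives
\[
T[\exp(-xw)] \;=\; \bigl(a + (\alpha+1)w - xw(1+w)\bigr)\,e^{-xw} \;=:\; P(x,w)\,e^{-xw}.
\]
I would first record the elementary fact that $xw$ is never a nonnegative real number when $\operatorname{Im}x>0$ and $\operatorname{Im}w>0$: writing $x = s+it$, $w = u+iv$ with $t,v>0$, if $\operatorname{Im}(xw)=sv+tu=0$ then $\operatorname{Re}(xw)=su-tv = -t(u^2+v^2)/v<0$. Hence for every $N$ the polynomial $(1-xw/N)^N$ is nonvanishing on $\{\operatorname{Im}x>0,\ \operatorname{Im}w>0\}$, so it lies in $A$, and since $(1-xw/N)^N \to e^{-xw}$ uniformly on compact sets, $e^{-xw}\in\overline{A}$.

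It remains to show $P(x,w)\in A$. Suppose $\operatorname{Im}w>0$ and $P(x,w)=0$; then, since $w\neq 0$ and $w+1\neq 0$, solving for $x$ and decomposing into partial fractions in $w$ yields
\[
x \;=\; \frac{a}{w} + \frac{\alpha+1-a}{w+1}.
\]
For $\operatorname{Im}w>0$ one has $\operatorname{Im}(1/w) = -\operatorname{Im}w/|w|^2 < 0$ and $\operatorname{Im}(1/(w+1)) = -\operatorname{Im}w/|w+1|^2 < 0$, while $0\le a\le\alpha+1$ makes both coefficients $a$ and $\alpha+1-a$ nonnegative and, since $\alpha+1>0$, not both zero; hence $\operatorname{Im}x<0$. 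Thus $P$ has no zero with $\operatorname{Im}x>0$ and $\operatorname{Im}w>0$, and as $P$ is not identically zero, $P\in A$. Consequently each $P(x,w)(1-xw/N)^N$ lies in $A$, these polynomials converge uniformly on compacta to $P(x,w)e^{-xw}=T[\exp(-xw)]$, so $T[\exp(-xw)]\in\overline{A}$. By Theorem~\ref{BBthm}, $T$ preserves reality of zeros, i.e. $\{k+a\}_{k=0}^{\infty}$ is an $L^{(\alpha)}$-multiplier sequence; specializing to $\alpha=0$ gives the stated $L$-multiplier claim.

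The computation of the symbol, the partial fraction step, and the various membership checks in $A$ are all routine. The one substantive point — and the place where the hypothesis $0\le a\le\alpha+1$ is used — is that the sign pattern of the coefficients of $P(x,w)$ forces $\operatorname{Im}x<0$ on the relevant bidomain, matching the fact that $e^{-xw}$ (not $e^{xw}$) is the exponential lying in $\overline{A}$; this alignment is the crux of the argument, and I expect it to be the main obstacle to getting the signs and the choice of condition~(2) right.
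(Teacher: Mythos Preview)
Your proof is correct and follows essentially the same strategy as the paper: both invoke the Borcea--Br\"and\'en criterion and reduce to showing that the polynomial symbol $a+(\alpha+1)w-xw(1+w)$ has no zero with $\operatorname{Im}x>0$, $\operatorname{Im}w>0$. The only real difference is in how that nonvanishing is verified: the paper factors the numerator as $(\alpha+1)(w+w_0)$ and uses the chain of argument inequalities $0<\arg w\le\arg(w+w_0)\le\arg(w+1)<\pi$, while you instead take the partial-fraction decomposition $x=\dfrac{a}{w}+\dfrac{\alpha+1-a}{w+1}$ and read off $\operatorname{Im}x<0$ directly from the nonnegativity of the coefficients. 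Your route is a bit more transparent and makes the role of the hypothesis $0\le a\le\alpha+1$ completely explicit; you also spell out the approximation of $e^{-xw}$ by the stable polynomials $(1-xw/N)^N$, a step the paper leaves implicit.
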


\begin{proof}
Consider the differential operator representation of the sequence at hand.
$$
T = (x-\alpha-1)D - x D^2 + a
$$
By the result of Borcea and Br\"and\'en (Theorem $\ref{BBthm}$) this operator preserves reality of zeros provided the polynomial
$$
a+(z-\alpha - 1)(-w) - z (-w)^2  = a-w(w+1)z+ w(\alpha+1)
$$
does not vanish whenever Im $z>0$ and Im $w>0$.  Setting the above equation equal to zero and solving for $z$ we obtain
$$
z = \frac{w(\alpha+1)+a}{w(w+1)}= (\alpha+1)\frac{w+w_0}{w(w+1)}, \qquad \left(w_0 = \frac{a}{\alpha+1}\right).
$$
Suppose Im $w>0$ and that $0\leq a \leq \alpha+1$.  Then $w_0<1$, and we have
$$
0< \arg(w)\leq \arg(w+w_0)\leq  \arg(w+1)< \pi,
$$
from which we obtain
$$
-\pi < -\arg(w+1) \leq \arg(w+w_0) - \arg(w) - \arg(w+1) \leq -\arg(w)< 0.
$$
Thus Im $z<0$ whenever Im $w>0$ and $0 \leq a \leq \alpha+1$. The proof is complete.    
\end{proof}

Combining lemmas $\ref{a_alpha_1}$, $\ref{a_alpha_2}$, and $\ref{a_alpha_3}$ we obtain the following theorem.

\begin{thm}\label{linchar}
$\{k+a\}_{k=0}^{\infty}$ is an $L^{(\alpha)}$-multiplier sequence if and only if $0\leq a\leq \alpha+1$.
\end{thm}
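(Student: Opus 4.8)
The plan is to assemble the two-sided bound from the three preceding lemmas, handling the ``if'' and ``only if'' directions separately. For sufficiency I would simply invoke Lemma \ref{a_alpha_3}, which already asserts that $\{k+a\}_{k=0}^{\infty}$ is an $L^{(\alpha)}$-multiplier sequence whenever $0 \le a \le \alpha+1$. For necessity I would argue by contraposition: since $\alpha > -1$ forces $\alpha+1 > 0$, any real $a$ lying outside the interval $[0,\alpha+1]$ satisfies either $a < 0$ or $a > \alpha+1$. In the first case Lemma \ref{a_alpha_1} shows that $\{k+a\}_{k=0}^{\infty}$ is not even a classical multiplier sequence, hence, by Theorem \ref{thmqmsms}, not an $L^{(\alpha)}$-multiplier sequence. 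In the second case Lemma \ref{a_alpha_2} produces an explicit witness, namely $(x+n)^n$ for $n$ large, whose image under the operator $T = a + (x-\alpha-1)D - xD^2$ acquires non-real zeros. Since these two cases exhaust the complement of $[0,\alpha+1]$, the contrapositive of the ``only if'' direction follows, and combining it with Lemma \ref{a_alpha_3} yields the stated equivalence.

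The write-up of the theorem itself requires only this logical bookkeeping — no new computation — so I anticipate no obstacle at this stage; all the substance has already been discharged in the lemmas. If one asks where the genuine difficulty lies, it is in Lemma \ref{a_alpha_3}: establishing sufficiency rests on the Borcea--Br\"and\'en symbol criterion (Theorem \ref{BBthm}) applied to $a - w(w+1)z + w(\alpha+1)$, together with the argument estimate $0 < \arg w \le \arg(w+w_0) \le \arg(w+1) < \pi$ valid for $w$ in the upper half-plane and $0 \le w_0 = a/(\alpha+1) \le 1$. This angle chase is precisely what pins down the right endpoint $a = \alpha+1$. The matching necessity bound is comparatively soft, coming from the sign of $\Delta(n) = n^2[\alpha^2 + 4a - 4n(a-(\alpha+1))]$, which becomes negative for large $n$ exactly when $a > \alpha+1$.
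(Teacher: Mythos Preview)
Your proposal is correct and follows exactly the paper's approach: the theorem is obtained simply by combining Lemmas \ref{a_alpha_1}, \ref{a_alpha_2}, and \ref{a_alpha_3}, with no additional computation. Your contrapositive bookkeeping and the observation that $\alpha>-1$ ensures the cases $a<0$ and $a>\alpha+1$ exhaust the complement of $[0,\alpha+1]$ are precisely the logical glue the paper leaves implicit.
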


\section{The sequence $\{k(k-1)(k-2) \cdots (k-(n-1))  \}_{k=0}^{\infty}$} \label{arbdegree}
The purpose of this section is to prove that the above sequence is an $L^{(\alpha)}$-multiplier sequence for $\alpha >-1$ and $n \geq 1$. To establish this fact we need several auxiliary results. We begin with the following lemma.
\begin{lem} \label{commutator} Let $\delta$ be the operator defined by $\delta:=(x-(\alpha+1))D-xD^2$. Then for $k \geq 0$ we have
\[
[\delta, D^k]:=\delta D^k-D^k\delta=-k(1-D)D^k.
\]
\end{lem}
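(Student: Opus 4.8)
The plan is to argue by induction on $k$. The case $k=0$ is immediate, since $D^0$ is the identity operator and both sides of the asserted identity vanish. The engine of the induction is the Leibniz property of the commutator: for any linear operators $A$ and $B$ on $\R[x]$,
\[
[\delta, AB] = [\delta, A]\,B + A\,[\delta, B],
\]
which one verifies at once by adding and subtracting $A\delta B$.

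First I would establish the base case $k=1$ by a direct computation, treating $x$ and $x-\alpha-1$ as multiplication operators. The point is the operator identities $D\circ x = x\circ D + 1$ and $D\circ(x-\alpha-1) = (x-\alpha-1)\circ D + 1$. Using these to expand $D\delta$, one finds that the top-order terms $(x-\alpha-1)D^2$ and $xD^3$ match those of $\delta D$ and cancel, leaving
\[
[\delta, D] = \delta D - D\delta = -D + D^2 = -(1-D)D,
\]
which is precisely the claimed formula with $k=1$.

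For the inductive step, assume $[\delta, D^k] = -k(1-D)D^k$. Applying the Leibniz property with $A = D^k$ and $B = D$ gives
\[
[\delta, D^{k+1}] = [\delta, D^k]\,D + D^k\,[\delta, D] = -k(1-D)D^{k+1} - D^k(1-D)D.
\]
Since $D^k$ and $1-D$ are both polynomials in $D$, they commute, so $D^k(1-D)D = (1-D)D^{k+1}$, and the right-hand side collapses to $-(k+1)(1-D)D^{k+1}$. This completes the induction.

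I do not anticipate a genuine obstacle here; the only step requiring care is the base case, where one must correctly track the operator ordering when commuting $D$ past the multiplication operators $x$ and $x-\alpha-1$, i.e.\ remembering the extra first-order terms coming from $D\circ x = xD+1$. Everything else is formal manipulation of polynomials in $D$. (Alternatively, one could prove the identity by evaluating both sides on an arbitrary polynomial via the Leibniz rule for $D^k$, but the commutator induction above is cleaner and avoids bookkeeping of binomial coefficients.)
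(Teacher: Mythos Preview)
Your proof is correct and follows essentially the same inductive strategy as the paper: both arguments establish the $k=0$ case trivially and then pass from $k$ to $k+1$ by unwinding $\delta D^{k+1}-D^{k+1}\delta$, which amounts to the Leibniz identity you state. Your version is slightly tidier in that you isolate and verify the $k=1$ case explicitly and name the commutator-derivation property, whereas the paper folds both into the inductive computation.
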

\begin{proof} If $k=0$ the result is trivial. Supposing the result holds for all integers up to $k$ we calculate
\begin{eqnarray*}
[\delta,D^{k+1}]&=&\delta D^{k+1}-D^{k+1} \delta=(\delta D^k)D-D(D^k \delta)\\
&=&(\delta D^k)D-D(\delta D^k+k(1-D)D^k)\\
&=&\delta D^{k+1}-(\delta D+(1-D)D)D^k-k(1-D)D^{k+1}\\
&=&-(k+1)(1-D)D^{k+1},
\end{eqnarray*}
establishing the desired equality.
\end{proof}

\begin{prop} \label{BIGlemma} Let $\delta$ be the operator defined by $\delta:=(x-(\alpha+1))D-xD^2$ and let $L_n^{(\alpha)}(x)$ be the $n^{th}$ generalized Laguerre polynomial. If
\begin{equation}\label{deltas}
\delta (\delta-1)(\delta-2)\cdots(\delta-(n-1))=\sum_{k=n}^{2n} {}_{2n}q_{k,\alpha}(x) D^k
\end{equation}
then
\begin{equation}\label{LaguerreRep}
\sum_{k=n}^{2n} {}_{2n}q_{k,\alpha}(x)z^k=n! (-1)^{n} z^n L_n^{(\alpha)}(x-xz).
\end{equation}
\end{prop}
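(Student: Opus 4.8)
The idea is to read off the full symbol of the operator $\delta(\delta-1)\cdots(\delta-(n-1))$ by applying it to the exponential $e^{xz}$. For any differential operator $P=\sum_k c_k(x)D^k$ with polynomial coefficients one has $P[e^{xz}]=e^{xz}\sum_k c_k(x)z^k$, because $D^k e^{xz}=z^k e^{xz}$. Applying this to both sides of (\ref{deltas}), with the operator acting in the variable $x$ and $z$ treated as a parameter, gives
\[
\sum_{k=n}^{2n}{}_{2n}q_{k,\alpha}(x)\,z^k \;=\; e^{-xz}\,\Big[\delta(\delta-1)\cdots(\delta-(n-1))\Big]\,e^{xz},
\]
so it suffices to show that the right-hand side equals $n!(-1)^n z^n L_n^{(\alpha)}(x-xz)$.

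I would next conjugate the operator by $e^{xz}$. Writing $\widetilde\delta:=e^{-xz}\circ\delta\circ e^{xz}$ for the operator $f\mapsto e^{-xz}\delta[e^{xz}f]$ acting in $x$, telescoping the factors $e^{xz}e^{-xz}=1$ yields
\[
e^{-xz}\Big[\prod_{j=0}^{n-1}(\delta-j)\Big]e^{xz}=\Big[\prod_{j=0}^{n-1}(\widetilde\delta-j)\Big]\!1 .
\]
A one-line computation from $D(e^{xz}f)=e^{xz}(f'+zf)$ gives the explicit form
\[
\widetilde\delta=\delta-2xz D+z\big(x(1-z)-(\alpha+1)\big).
\]
Put $p_n:=\big[\prod_{j=0}^{n-1}(\widetilde\delta-j)\big]1$, a polynomial in $x$ depending on the parameters $z$ and $\alpha$; then $p_1=\widetilde\delta[1]=z(x(1-z)-(\alpha+1))$ and $p_{n+1}=(\widetilde\delta-n)p_n$. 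The claim (\ref{LaguerreRep}) is exactly $p_n=n!(-1)^n z^n L_n^{(\alpha)}(x-xz)$, which I would prove by induction on $n$.

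Since the factor $n!(-1)^n z^n$ is a scalar with respect to $x$ and therefore commutes with $\widetilde\delta$, the inductive step comes down to the identity
\[
(\widetilde\delta-n)\big[L_n^{(\alpha)}(x-xz)\big]=-(n+1)\,z\,L_{n+1}^{(\alpha)}(x-xz).
\]
Setting $y=x(1-z)$ and differentiating via the chain rule, the $\delta$-contribution simplifies using the Laguerre differential equation (\ref{LaguerreODE}) at $y$; once the $2xzD$ term and the multiplication term are added in and a second application of (\ref{LaguerreODE}) removes $L_n^{(\alpha)''}$, the left-hand side collapses to $z\big[-y L_n^{(\alpha)'}(y)+(y-\alpha-1-n)L_n^{(\alpha)}(y)\big]$. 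Thus everything reduces to the one-variable Laguerre identity
\[
-y L_n^{(\alpha)'}(y)+(y-\alpha-1-n)L_n^{(\alpha)}(y)=-(n+1)L_{n+1}^{(\alpha)}(y),
\]
which follows at once from the standard relations $y L_n^{(\alpha)'}(y)=nL_n^{(\alpha)}(y)-(n+\alpha)L_{n-1}^{(\alpha)}(y)$ and $(n+1)L_{n+1}^{(\alpha)}(y)=(2n+\alpha+1-y)L_n^{(\alpha)}(y)-(n+\alpha)L_{n-1}^{(\alpha)}(y)$. The base case is $p_1=z(x(1-z)-(\alpha+1))=-z L_1^{(\alpha)}(x-xz)$.

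The conceptual content is modest; the work lies in the bookkeeping of the middle steps — computing $\widetilde\delta$ correctly and carrying the inductive identity through the chain rule and the two applications of (\ref{LaguerreODE}) without sign errors — and I expect this to be the main (purely computational) obstacle. No analytic subtleties arise: both sides of every displayed equality are polynomials in $x$ and $z$, so there is in particular no difficulty at $z=1$. (That the operator in (\ref{deltas}) genuinely has no $D^k$-term with $k<n$, as its statement asserts, is also automatic: since $\delta L_k^{(\alpha)}=kL_k^{(\alpha)}$ by (\ref{LaguerreODE}), the operator $\delta(\delta-1)\cdots(\delta-(n-1))$ annihilates $L_0^{(\alpha)},\dots,L_{n-1}^{(\alpha)}$, hence every polynomial of degree $<n$, which forces the vanishing of the coefficients of $D^0,\dots,D^{n-1}$.)
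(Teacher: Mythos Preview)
Your argument is correct and is a genuinely different route from the paper's. Both proofs are inductions on $n$ that finish with standard Laguerre recurrences, but the paper works directly at the operator level: it needs the commutator identity $[\delta,D^k]=-k(1-D)D^k$ (Lemma~\ref{commutator}) to push the new factor $(\delta-(n-1))$ past the $D^k$'s, and only afterwards substitutes $D\mapsto z$ and invokes the relations (\ref{recrel1})--(\ref{recrel2}). Your approach front-loads the symbol extraction by applying the operator to $e^{xz}$; conjugation by $e^{xz}$ replaces $D$ by $D+z$ in one stroke, yielding $\widetilde\delta$ explicitly and reducing the problem to iterating $(\widetilde\delta-n)$ on functions of $y=x(1-z)$. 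This bypasses Lemma~\ref{commutator} entirely and makes the inductive step a single application of the Laguerre ODE plus the three-term recurrence. The trade-off is that the paper uses only the identities (\ref{recrel1})--(\ref{recrel2}) already recorded there, whereas you invoke the three-term recurrence $(n+1)L_{n+1}^{(\alpha)}=(2n+\alpha+1-y)L_n^{(\alpha)}-(n+\alpha)L_{n-1}^{(\alpha)}$; this is of course standard, but strictly speaking it is an extra input. One small point worth making explicit in your write-up: the factors $\widetilde\delta-j$ commute with one another (they differ by scalars), which is what justifies $p_{n+1}=(\widetilde\delta-n)p_n$ from the left even though the original product $\delta(\delta-1)\cdots(\delta-(n-1))$ is written with the new factor on the right.
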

\begin{proof} We proceed by induction on $n$. If $n=1$ the left hand side of $(\ref{deltas})$ is just $(x-(\alpha+1))D-xD^2$ which, after replacing $D^k$ by $z^k$ gives $(x-(\alpha+1))z-xz^2=-z L_1^{(\alpha)}(x-xz)$. Thus the statement of the proposition holds in case $n=1$. Next we calculate
\begin{eqnarray*}
 \delta(\delta-1)(\delta-2) \cdots (\delta-(n-1))&=&\sum_{k=n}^{2n}{}_{2n}q_{k,\alpha}(x)D^k \\
&=&\left(\sum_{k=n}^{2n-2} {}_{2n-2}q_{k,\alpha}(x)D^k \right)(\delta-(n-1))\\
&=&\sum_{k=n}^{2n-2} {}_{2n-2}q_{k,\alpha}(x)(x-(\alpha+1))D^{k+1}+\sum_{k=n}^{2n-2} {}_{2n-2}q_{k,\alpha}(x)kD^k\\
&-&\sum_{k=n}^{2n-2} {}_{2n-2}q_{k,\alpha}(x)kD^{k+1}-x\sum_{k=n}^{2n-2} {}_{2n-2}q_{k,\alpha}(x)D^{k+2}\\
&-&(n-1)\sum_{k=n}^{2n-2} {}_{2n-2}q_{k,\alpha}(x)D^k.
\end{eqnarray*}
Going from the second to the third line in this calculation we made use of Lemma \ref{commutator}. Replacing $D^k$ by $z^k$ in this expression along with the inductive hypothesis gives 
\begin{eqnarray*}
\sum_{k=n}^{2n} {}_{2n}q_{k,\alpha}(x)z^k&=&z(x-(\alpha+1))(n-1)!(-1)^{n-1}z^{n-1}L_{n-1}^{(\alpha)}(x-xz)\\
&+&(z-z^2)D_z\left[ (n-1)!(-1)^{n-1}z^{n-1}L_{n-1}^{(\alpha)}(x-xz) \right]\\
&-&xz^2(n-1)!(-1)^{n-1}z^{n-1}L_{n-1}^{(\alpha)}(x-xz)\\
&-&(n-1)(n-1)!(-1)^{n-1}z^{n-1}L_{n-1}^{(\alpha)}(x-xz)\\
&=&(n-1)!(-1)^{n-1}z^{n-1}\left\{z(x-(\alpha+1))L_{n-1}^{(\alpha)}(x-xz) \right.\\
&+& (1-z)\left[(n-1)L_{n-1}^{(\alpha)}(x-xz)-zx\frac{d}{dw}\left[L_{n-1}^{(\alpha)}(w) \right]_{w=x-xz} \right]\\
&-&\left. xz^2L_{n-1}^{(\alpha)}(x-xz)-(n-1)L_{n-1}^{(\alpha)}(x-xz)\right\}\\
&=&(n-1)!(-1)^{n-1}z^{n-1}\left\{-z(\alpha+n)L_{n-1}^{(\alpha)}(x-xz) \right.\\
&+& \left. z(x-xz)L_{n-1}^{(\alpha)}(x-xz)-z(x-xz)\frac{d}{dw}\left[L_{n-1}^{(\alpha)}(w) \right]_{w=x-xz} \right\}.
\end{eqnarray*}
Since the generalized Laguerre polynomials satisfy the relations 
\begin{eqnarray}
xDL_n^{(\alpha)}(x)&=&nL_n^{(\alpha)}(x)-(\alpha+n)L_{n-1}^{(\alpha)}(x) \label{recrel1} \\
DL_n^{(\alpha)}(x)&=&DL_{n-1}^{(\alpha)}(x)-L_{n-1}^{(\alpha)}(x) \label{recrel2}
\end{eqnarray}
(see for example Ch. 12 in \cite{Rainville}), it follows that 
\begin{eqnarray*}
&&(n-1)!(-1)^{n-1}z^{n-1}\left\{-z(\alpha+n)L_{n-1}^{(\alpha)}(x-xz) \right.\\
&+& \left. z(x-xz)L_{n-1}^{(\alpha)}(x-xz)-z(x-xz)\frac{d}{dw}\left[L_{n-1}^{(\alpha)}(w) \right]_{w=x-xz}\right\}\\
&=&(n-1)!(-1)^{n-1}z^{n-1}\left\{-znL_{n}^{(\alpha)}(x-xz) \right.\\
&+& z(x-xz)DL_{n-1}^{(\alpha)}(x-xz)-z(x-xz)L_{n-1}^{(\alpha)}(x-xz) \\
&+& \left. z(x-xz)L_{n-1}^{(\alpha)}(x-xz)-z(x-xz)DL_{n-1}^{(\alpha)}(x-xz)\right\}\\
&=&(n-1)!(-1)^{n-1}z^{n-1}(-znL_n^{(\alpha)}(x-xz))\\
&=&n!(-1)^nz^nL_n^{(\alpha)}(x-xz).
\end{eqnarray*}
The proof of Proposition \ref{BIGlemma} is complete.
\end{proof}

\begin{thm} The sequence $\set{k(k-1)(k-2)\cdots(k-(n-1))}_{k=0}^{\infty}$ is an $L^{(\alpha)}$-multiplier sequence for $n \in \N, \ n \geq 1$.
\end{thm}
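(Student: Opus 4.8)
The plan is to realize the operator induced by $\set{k(k-1)(k-2)\cdots(k-(n-1))}_{k=0}^{\infty}$ as an explicit polynomial--coefficient differential operator and then apply the Borcea--Br\"and\'en criterion, Theorem \ref{BBthm}. By the Laguerre differential equation $(\ref{LaguerreODE})$, the operator $\delta = (x-(\alpha+1))D - xD^2$ acts diagonally on the Laguerre basis, $\delta[L_k^{(\alpha)}] = kL_k^{(\alpha)}$; hence $\delta(\delta-1)(\delta-2)\cdots(\delta-(n-1))$ sends $L_k^{(\alpha)}$ to $k(k-1)\cdots(k-(n-1))L_k^{(\alpha)}$, i.e.\ it is precisely the operator $T_L$ attached to our sequence. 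Proposition \ref{BIGlemma} then lets me write $T_L = \sum_{k=n}^{2n}{}_{2n}q_{k,\alpha}(x)D^k$ (equation $(\ref{deltas})$), with the coefficient generating identity $\sum_{k=n}^{2n}{}_{2n}q_{k,\alpha}(x)z^k = n!(-1)^n z^n L_n^{(\alpha)}(x-xz)$ from $(\ref{LaguerreRep})$.

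Next I would compute the transform appearing in part~(2) of Theorem \ref{BBthm}. Since $T_L$ has finite order and polynomial coefficients, applying it termwise to $\exp(-xw)=\sum_{m\geq 0}\frac{(-w)^m x^m}{m!}$ is legitimate, and gives
\[
T_L[\exp(-xw)] = \sum_{m=0}^{\infty}\frac{(-w)^m T_L[x^m]}{m!} = e^{-xw}\sum_{k=n}^{2n}{}_{2n}q_{k,\alpha}(x)(-w)^k .
\]
Substituting $z=-w$ into the generating identity collapses the right-hand side to $n!\,w^n e^{-xw}L_n^{(\alpha)}(x+xw)$, so it remains to show that this entire function of $(x,w)$ lies in $\overline{A}$. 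I would argue factor by factor: $n!$ is a nonzero constant; $w^n\in A$, since $w\ne 0$ when $\operatorname{Im}(w)>0$; $e^{-xw}\in\overline{A}$, because $e^{-xw}=\lim_{N\to\infty}(1-xw/N)^N$ uniformly on compacta and each $(1-xw/N)^N\in A$ (it vanishes only where $xw$ is a positive real, which is impossible when $x$ and $w$ both lie in the open upper half-plane, as then $\arg x$ and $\arg w$ each lie in $(0,\pi)$); and $L_n^{(\alpha)}(x+xw)\in A$, because it is a real polynomial in $x,w$ whose zeros occur exactly where $x(1+w)$ equals one of the zeros $u_1,\dots,u_n$ of $L_n^{(\alpha)}$ --- all positive, since $L_n^{(\alpha)}$ is orthogonal on $(0,\infty)$ for $\alpha>-1$ --- while $x$ and $1+w$ both lie in the open upper half-plane when $\operatorname{Im}(x)>0$ and $\operatorname{Im}(w)>0$ (note $\operatorname{Im}(1+w)=\operatorname{Im}(w)$), so that $x(1+w)$ is never a positive real. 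Since $\overline{A}$ is closed under multiplication and contains the nonzero constants, the product $n!\,w^n e^{-xw}L_n^{(\alpha)}(x+xw)$ lies in $\overline{A}$; by Theorem \ref{BBthm}(2), $T_L$ preserves reality of zeros, i.e.\ $\set{k(k-1)(k-2)\cdots(k-(n-1))}_{k=0}^{\infty}$ is an $L^{(\alpha)}$-multiplier sequence.

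The one delicate point is choosing the correct alternative in the criterion: one must test part~(2), with $\exp(-xw)$, and not part~(3). Part~(3) produces instead the factor $L_n^{(\alpha)}(x-xw)$, and since $1-w$ lies in the \emph{lower} half-plane when $\operatorname{Im}(w)>0$, the product $x(1-w)$ \emph{can} be a positive real (e.g.\ $x=it$, $w=1+is$ with $t,s>0$ give $x(1-w)=ts>0$); so that transform genuinely vanishes somewhere in $\{\operatorname{Im}(x)>0,\ \operatorname{Im}(w)>0\}$ and part~(3) is unavailable. All the remaining work --- checking that each factor of the part-(2) transform lies in $A$ or $\overline{A}$, and recording that $\overline{A}$ is a multiplicative semigroup --- is routine once the positivity of the zeros of $L_n^{(\alpha)}$ is invoked.
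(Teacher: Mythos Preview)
Your proof is correct and follows essentially the same route as the paper: identify $T_L=\delta(\delta-1)\cdots(\delta-(n-1))$, use Proposition~\ref{BIGlemma} to compute $T_L[\exp(-xw)]=n!\,w^n e^{-xw}L_n^{(\alpha)}(x+xw)$, and then verify membership in $\overline{A}$ via the approximation $e^{-xw}=\lim_N(1-xw/N)^N$ together with the positivity of the zeros of $L_n^{(\alpha)}$. The only cosmetic difference is that the paper bundles everything into a single approximating sequence $f_m(x,w)=n!\,w^nL_n^{(\alpha)}(x+xw)(1-xw/m)^m$ and checks $f_m\in A$ directly, whereas you argue factor by factor and invoke closure of $\overline{A}$ under multiplication; your extra paragraph explaining why alternative~(3) of Theorem~\ref{BBthm} fails is a helpful addition not present in the paper.
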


\begin{proof}
Let $T$ be the linear operator defined by $T[L_k^{(\alpha)}(x)] = k(k-1)\cdots(k-n+1)L_k^{(\alpha)}(x)$.  Then $T = \delta (\delta-1)(\delta-2)\cdots(\delta-(n-1))$, where $\delta:=(x-(\alpha+1))D-xD^2$ and $D$ denotes differentiation with respect to $x$.  Using the definition of the generalized Laguerre polynomials, we have
\begin{eqnarray*}
n!(-1)^n z^n L_n^{(\alpha)}(x-xz) &=& n!(-1)^n z^n \sum_{k=0}^{n}\left(\begin{array}{c} n+\alpha \\ n-k \end{array} \right)(-1)^k \frac{(x-xz)^k}{k!}\\
&=& n!(-1)^n \sum_{k=0}^{n}\left(\begin{array}{c} n+\alpha \\ n-k \end{array} \right)(-1)^k \frac{x^k}{k!} z^n (1-z)^k.
\end{eqnarray*}
Thus, by Proposition \ref{BIGlemma},  
$$
T = n!(-1)^n \sum_{k=0}^{n}\left(\begin{array}{c} n+\alpha \\ n-k \end{array} \right)(-1)^k \frac{x^k}{k!} D^n (1-D)^k,
$$
and we have 
\begin{eqnarray*}
T[\exp(-xw)] &=& n!(-1)^n \sum_{k=0}^{n}\left(\begin{array}{c} n+\alpha \\ n-k \end{array} \right)(-1)^k \frac{x^k}{k!} D^n (1-D)^k [\exp(-xw)]\\
&=& n!(-1)^n \sum_{k=0}^{n}\left(\begin{array}{c} n+\alpha \\ n-k \end{array} \right)(-1)^k \frac{x^k}{k!} (-w)^n (1+w)^k \exp(-xw)\\
&=& n!(-1)^n (-w)^n L_n^{(\alpha)}(x+xw) \exp(-xw).
\end{eqnarray*}
Note that   
$$
f_m(x,w) = n!(-1)^n (-w)^n L_n^{(\alpha)}(x+xw) \left(1-\frac{xw}{m}\right)^m \qquad (m\in\N)
$$
converges uniformly on compact subsets to $T[\exp(-xw)]$ as $m\to\infty$.  Let $0<x_1<x_2<\cdots<x_n$ be the zeros of $L_n^{(\alpha)}(x)$ (recall that the generalized Laguerre polynomials have only real simple positive zeros).  Then $f_m(x,w)=0$ if and only if either $w=0$, $x(1+w)=x_k$ or $xw=m$, none of which occur when Im $x>0$ and Im $w>0$.  Therefore, by Theorem \ref{BBthm}, $T$ preserves reality of zeros.  
\end{proof}
We conclude this section with a corollary to this theorem. Although the corollary does not have a direct application to the development of $L^{(\alpha)}$-multiplier sequences, it is a quick result so we include it here.
\begin{cor} Let $\delta$ be as in Proposition \ref{BIGlemma} and let 
\[
\delta (\delta-1) \cdots (\delta-(n-1))=\sum_{k=n}^{2n} {}_{2n}q_{k,\alpha}(x)D^k.
\]
Then 
\[
\sum_{k=n}^{2n} {}_{2n}q_{k,\alpha}(x)=(-1)^n \prod_{k=1}^n (\alpha+k).
\]
\end{cor}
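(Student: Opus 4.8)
The plan is to evaluate the polynomial identity (\ref{LaguerreRep}) of Proposition \ref{BIGlemma} at the single point $z=1$. The left-hand side of (\ref{LaguerreRep}) is $\sum_{k=n}^{2n} {}_{2n}q_{k,\alpha}(x) z^k$, which at $z=1$ becomes precisely the sum $\sum_{k=n}^{2n} {}_{2n}q_{k,\alpha}(x)$ whose value we wish to determine. On the right-hand side, $n!(-1)^n z^n L_n^{(\alpha)}(x-xz)$ collapses at $z=1$ to $n!(-1)^n L_n^{(\alpha)}(0)$, since the argument $x - xz$ vanishes. Thus the entire problem reduces to computing $L_n^{(\alpha)}(0)$.

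Next I would evaluate $L_n^{(\alpha)}(0)$ directly from the definition used in this paper, namely $L_n^{(\alpha)}(x) = \sum_{k=0}^{n}\binom{n+\alpha}{n-k}\frac{(-x)^k}{k!}$. Setting $x=0$ kills every term except the one with $k=0$, leaving $L_n^{(\alpha)}(0) = \binom{n+\alpha}{n}$, where the generalized binomial coefficient is interpreted (consistently with the paper's definition of $L_n^{(\alpha)}$) as $\binom{n+\alpha}{n} = \frac{(n+\alpha)(n+\alpha-1)\cdots(\alpha+1)}{n!} = \frac{1}{n!}\prod_{k=1}^{n}(\alpha+k)$. Substituting this back, I obtain $\sum_{k=n}^{2n} {}_{2n}q_{k,\alpha}(x) = n!(-1)^n\cdot\frac{1}{n!}\prod_{k=1}^{n}(\alpha+k) = (-1)^n\prod_{k=1}^{n}(\alpha+k)$, which is exactly the claimed formula.

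I do not anticipate any real obstacle here: once Proposition \ref{BIGlemma} is in hand, the corollary is a one-line specialization followed by a standard evaluation of a Laguerre polynomial at the origin. The only point meriting a word of care is the interpretation of $\binom{n+\alpha}{n}$ when $\alpha$ is not an integer; using the falling-factorial convention built into the paper's definition of $L_n^{(\alpha)}$ resolves this immediately, and no convergence or analyticity issues arise since (\ref{LaguerreRep}) is an identity between polynomials in $z$, so evaluation at $z=1$ is unproblematic.
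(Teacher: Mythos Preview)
Your proof is correct and follows essentially the same approach as the paper: both specialize identity (\ref{LaguerreRep}) at $z=1$ to reduce the sum to $n!(-1)^n L_n^{(\alpha)}(0)$, and then evaluate $L_n^{(\alpha)}(0)$. The only cosmetic difference is that the paper computes $L_n^{(\alpha)}(0)$ via the generating function $\frac{1}{(1-t)^{1+\alpha}}e^{-xt/(1-t)}=\sum_{n\ge 0}L_n^{(\alpha)}(x)t^n$ at $x=0$, whereas you read it off directly from the explicit sum defining $L_n^{(\alpha)}$; both yield $L_n^{(\alpha)}(0)=\binom{n+\alpha}{n}$.
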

\begin{proof} By Proposition \ref{BIGlemma} we have
\[
\sum_{k=n}^{2n} {}_{2n}q_{k,\alpha}(x)=\sum_{k=n}^{2n} {}_{2n}q_{k,\alpha}(x)z^k \Big|_{z=1}=n!(-1)^nL_n^{(\alpha)}(0).
\]
On the other hand, using the generating function
\[
\frac{1}{(1-t)^{1+\alpha}} e^{\frac{-xt}{1-t}}=\sum_{n=0}^{\infty} L_n^{(\alpha)}(x)t^n
\]
we see that
\[
n!(-1)^nL_n^{(\alpha)}(0)=(-1)^n \prod_{k=1}^n (\alpha+k).
\]
\end{proof}

\section{Properties of $L^{(\alpha)}$-multiplier sequences} 
\subsection{Classical Properties}
There are a number of properties of the classical multiplier sequences which are easily verified.  Here we list those that carry over to $L^{(\alpha)}$-multiplier sequences. 
\begin{lem} \label{properties} Let $\set{\gamma_k}_{k=0}^{\infty}$ be an $L^{(\alpha)}$-multiplier sequence. Then:
\begin{itemize}
\item[(i)] If there exists an integers $n> m \geq 0$ such that $\gamma_m \neq 0$ and $\gamma_n=0$, then $\gamma_k=0$ for all $k \geq n$.
\item[(ii)] The terms of $\set{\gamma_k}_{k=0}^{\infty}$ are either all of the same sign, or they alternate in sign.
\item[(iii)] For any $r \in \R$, the sequence $\set{r \gamma_k}_{k=0}^{\infty}$ is also an $L^{(\alpha)}$-multiplier sequence.
\item[(iv)] The terms of $\set{\gamma_k}_{k=0}^{\infty}$ satisfy Tur\'an's inequality
\[
\gamma_k^2-\gamma_{k-1}\gamma_{k+1} \geq 0, \quad \quad k=1,2,3, \ldots
\] 
\end{itemize}
\end{lem}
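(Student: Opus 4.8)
The plan is to prove each of the four items by leveraging Theorem~\ref{thmqmsms}, which guarantees that every $L^{(\alpha)}$-multiplier sequence is a classical multiplier sequence, together with the basic structure theory for classical multiplier sequences that underlies the P\'olya--Schur theorem. The key observation is that items (i), (ii), and (iv) are all \emph{inherited} from the classical theory: since $\set{\gamma_k}$ is a classical multiplier sequence, it automatically satisfies the zero-pattern constraint (once a term vanishes after a nonzero term, all subsequent terms vanish), the sign-regularity constraint (terms are of one sign or strictly alternating), and Tur\'an's inequality $\gamma_k^2 \geq \gamma_{k-1}\gamma_{k+1}$. All three of these are standard consequences of the P\'olya--Schur characterization (item (3) of Theorem~\ref{PS}), so the proofs of (i), (ii), (iv) reduce to a single line each: ``By Theorem~\ref{thmqmsms}, $\set{\gamma_k}$ is a classical multiplier sequence, and classical multiplier sequences are known to have this property.''

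For item (iii) — closure under scalar multiplication — the argument is different and direct, not borrowed from the classical theory. I would observe that the operator associated with $\set{r\gamma_k}$ is simply $r$ times the operator $T_L$ associated with $\set{\gamma_k}$, because $T_L$ is linear and acts diagonally on the Laguerre basis: $(rT_L)[L_k^{(\alpha)}] = r\gamma_k L_k^{(\alpha)}$. Then for any polynomial $p$ with only real zeros, $T_L[p]$ has only real zeros by hypothesis, and multiplying a polynomial by a nonzero real constant $r$ does not change its zero set, so $rT_L[p]$ also has only real zeros; the case $r=0$ is covered by our convention that the zero polynomial has only real zeros. Hence $\set{r\gamma_k}$ is an $L^{(\alpha)}$-multiplier sequence.

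The main (and really the only) obstacle is bibliographic rather than mathematical: one must cite precise statements in the classical multiplier-sequence literature for the three inherited properties. Property (i) follows from the fact that the P\'olya--Schur function $\varphi(z) = \sum \gamma_k z^k/k!$ is entire of a prescribed form, so a zero coefficient $\gamma_n$ after a nonzero $\gamma_m$ forces $\varphi$ to be a polynomial of degree $<n$; property (ii) follows because in item (3) of Theorem~\ref{PS} the function $\varphi(z)$ (or $\varphi(-z)$) has all nonnegative Taylor coefficients, and passing between $\varphi(z)$ and $\varphi(-z)$ is exactly the alternating-sign case; property (iv) is the classical Laguerre--Tur\'an inequality for multiplier sequences (see, e.g., the surveys on the subject). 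I would phrase the proof so that each of (i), (ii), (iv) is dispatched by invoking Theorem~\ref{thmqmsms} and then the corresponding classical fact, and I would spell out (iii) in full since it is short and self-contained. No delicate estimate or new construction is needed anywhere.
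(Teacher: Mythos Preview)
Your proposal is correct and matches the paper's own proof almost exactly: the paper dispatches all four items in two sentences by invoking Theorem~\ref{thmqmsms} together with the classical facts for multiplier sequences (citing \cite{Levin}). Your separate, direct treatment of (iii) is in fact a shade more careful than the paper's, since (iii) is a statement about $L^{(\alpha)}$-multiplier sequences rather than classical ones and so does not literally follow from Theorem~\ref{thmqmsms}; but the argument is trivial either way, and the overall strategy is identical.
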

\begin{proof} These claims follow immediately from Theorem \ref{thmqmsms} and the fact the generalized Laguerre polynomials form a simple set of polynomials.  Properties $(i)-(iv)$ for classical multiplier sequences have been established in \cite{Levin}.
\end{proof}
\begin{rmk} To draw further contrast between $L^{(\alpha)}$-multiplier sequences,  Hermite multiplier sequences, and classical multiplier sequences, we demonstrate that the following two properties, which hold for multiplier sequences and Hermite multiplier sequences, do \emph{not} hold for $L^{(\alpha)}$-multiplier sequences.  
\begin{itemize}
\item[(a)] If $\set{\gamma_k}_{k=0}^{\infty}$ is a multiplier sequence, then $\set{\gamma_k}_{k=m}^{\infty}$ is a multiplier sequence for any $m\in\mbb{N}$. 
\item[(b)] If $\set{\gamma_k}_{k=0}^{\infty}$ is a multiplier sequence, then $\set{(-1)^k\gamma_k}_{k=0}^{\infty}$ is a multiplier sequence.  
\end{itemize}
For property (a), we note that for the simple Laguerre polynomials ($\alpha=0$), the sequence $\{k+1\}_{k=0}^{\infty}$ is an $L^{(0)}$-multiplier sequence, but $\{k+1\}_{k=1}^{\infty} = \{k+2\}_{k=0}^{\infty}$ is not (see Theorem \ref{linchar}).

For property (b), we note again that $\{k+1\}_{k=0}^{\infty}$ is an $L^{(0)}$-multiplier sequence.  We now show that $\{(-1)^k(k+1)\}_{k=0}^{\infty}$ is not. The polynomial
$$
p(x) = (x-10)^2 = 82L_0^{(0)}(x)+16 L_1^{(0)}(x)+2 L_2^{(0)}(x)
$$   
has only real zeros, while
$$
3 \cdot 82L_0^{(0)}(x)-2\cdot16 L_1^{(0)}(x)+1\cdot2 L_2^{(0)}(x) = 3x^2+20x+56
$$
has two non-real zeros.
\end{rmk}

\begin{prop} Suppose that $\set{\gamma_n}$ is a non-trivial $L^{(\alpha)}$-multiplier sequence for some $\alpha >-1$. Then there exists an $m \in \mbb{Z}$ such that $\gamma_k=0$ for all $k<m$ and $\gamma_k \neq 0$ for all $k \geq m$. 
\end{prop}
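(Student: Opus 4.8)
The plan is to first pin down the possible shape of the support of a non-trivial $L^{(\alpha)}$-multiplier sequence, and then to eliminate the only troublesome case --- a sequence whose nonzero terms form a block of length at least three --- by invoking the Borcea--Br\"and\'en criterion, Theorem \ref{BBthm}.

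First I would argue that $\{\gamma_n\}$, being non-trivial, is not identically zero, so $m:=\min\{k:\gamma_k\neq 0\}$ exists and $\gamma_k=0$ for $k<m$. If $\gamma_k\neq 0$ for all $k\geq m$ we are done; otherwise let $N+1$ be the smallest index greater than $m$ with $\gamma_{N+1}=0$. Applying Lemma \ref{properties}(i) with $\gamma_N\neq 0$ and $\gamma_{N+1}=0$ forces $\gamma_k=0$ for every $k>N$, while minimality of $N+1$ gives $\gamma_k\neq 0$ for $m\leq k\leq N$; hence $\{\gamma_k\}$ is supported exactly on $\{m,m+1,\dots,N\}$. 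If $N\leq m+1$ this sequence has the form in Proposition \ref{trivial}, i.e.\ it is trivial, contrary to assumption, so $N\geq m+2$. It then remains to show that a sequence supported exactly on such a block is \emph{not} an $L^{(\alpha)}$-multiplier sequence.

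For such a sequence, $T_L$ sends $\sum_k c_k L_k^{(\alpha)}$ to $\sum_{k=m}^{N}\gamma_k c_k L_k^{(\alpha)}$, so its range is $\mathrm{span}\{L_m^{(\alpha)},\dots,L_N^{(\alpha)}\}$, of dimension $N-m+1\geq 3$; thus $T_L$ fails condition (1) of Theorem \ref{BBthm}. To deal with (2) and (3) I would compute $T_L[\exp(\pm xw)]$ in closed form. Substituting $t=w/(1+w)$ into the generating identity $\sum_{k\geq 0}L_k^{(\alpha)}(x)t^k=(1-t)^{-(\alpha+1)}e^{-xt/(1-t)}$ (already used in Section \ref{arbdegree}) gives $e^{-xw}=\sum_{k\geq 0}L_k^{(\alpha)}(x)\,w^k(1+w)^{-(k+\alpha+1)}$; comparing this with $e^{-xw}=\sum_n\frac{(-w)^n}{n!}x^n$ together with the expansion of each $x^n$ in the Laguerre basis identifies the relevant coefficient generating functions, and passing them through $T_L$ yields
\[
T_L[\exp(-xw)]=\sum_{k=m}^{N}\gamma_k L_k^{(\alpha)}(x)\,\frac{w^k}{(1+w)^{k+\alpha+1}},
\]
and, substituting $t=-w/(1-w)$ instead, the analogous formula for $T_L[\exp(xw)]$ with $1+w$ replaced by $1-w$ and $w$ by $-w$. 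In the first formula the $k=N$ summand has a singularity at $w=-1$ of order $N+\alpha+1>0$, strictly larger than that of any other summand, and its leading factor $\gamma_N L_N^{(\alpha)}(x)$ is not identically zero; hence the finite sum is genuinely singular at $w=-1$ and does not define an entire function of $(x,w)$, so $T_L[\exp(-xw)]\notin\overline{A}$. The same reasoning at $w=1$ shows $T_L[\exp(xw)]\notin\overline{A}$. Thus $T_L$ satisfies none of (1)--(3), so by Theorem \ref{BBthm} it does not preserve reality of zeros --- contradicting that $\{\gamma_k\}$ is an $L^{(\alpha)}$-multiplier sequence, and completing the proof.

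I expect the main obstacle to be the elimination of the finite-block case. Theorem \ref{thmqmsms} is of no help here, since finitely supported \emph{classical} multiplier sequences with arbitrarily many nonzero terms exist, so one really must route the argument through Theorem \ref{BBthm}; the work lies in carrying out the generating-function computation of $T_L[\exp(\pm xw)]$ cleanly and then checking that the highest-order pole (at $w=-1$, resp.\ $w=1$) cannot be cancelled by the lower-order ones --- which is immediate once one notices that the pole orders $k+\alpha+1$ strictly increase with $k$.
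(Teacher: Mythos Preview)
Your argument is correct, but it differs from the paper's. After the common reduction to a sequence supported on a block $\{m,m+1,\dots,N\}$ with $N\geq m+2$, the paper produces an explicit witness: by Lemma~\ref{bmax} one can choose $a_m,a_{m+2}$ so that $a_m\gamma_m L_m^{(\alpha)}+a_{m+2}\gamma_{m+2}L_{m+2}^{(\alpha)}$ has a non-real zero, and then by Lemma~\ref{open} one can pick $a_n$ large (for any index $n$ with $\gamma_n=0$, e.g.\ $n=N+1$) so that $q(x)=a_mL_m^{(\alpha)}+a_{m+2}L_{m+2}^{(\alpha)}+a_nL_n^{(\alpha)}$ has only real zeros; since $T_L[q]$ is the bad combination, $T_L$ fails to preserve reality. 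Your route instead checks the Borcea--Br\"and\'en criteria of Theorem~\ref{BBthm} directly: the range of $T_L$ has dimension $N-m+1\geq 3$, ruling out~(1), and the Laguerre generating function yields
\[
T_L[e^{-xw}]=\sum_{k=m}^{N}\gamma_k L_k^{(\alpha)}(x)\,\frac{w^k}{(1+w)^{k+\alpha+1}},
\]
which has a genuine singularity at $w=-1$ (the $k=N$ term dominates), so the symbol is not entire and hence not in~$\overline{A}$, ruling out~(2); the same at $w=1$ rules out~(3).

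Both approaches are valid. The paper's is more elementary and constructive---it hands you a concrete polynomial on which $T_L$ fails---and it reuses the Hurwitz-type Lemmas~\ref{open} and~\ref{bmax} that are needed anyway for the monotonicity theorem. Your approach is slicker and avoids those auxiliary lemmas entirely, but it leans on the full force of Theorem~\ref{BBthm} and on a generating-function identity; the one point worth making explicit in a write-up is that the exchange of $T_L$ with the formal series is legitimate precisely because only finitely many Laguerre components survive, so the resulting expression is a finite sum of functions analytic on $|w|<1$, and hence the formal series has radius of convergence exactly~$1$ in $w$ for any $x$ with $L_N^{(\alpha)}(x)\neq 0$.
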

\begin{proof} Since $\set{\gamma_n}$ is a non-trivial multiplier sequence, there is at least one $k \in \mbb{Z}$ such that $\gamma_k \neq 0$. Let $m$ be the minimal index such that $\gamma_m \neq 0$. It is easy to see that $\gamma_{m+1}$ and $\gamma_{m+2}$ are non-zero, for if either of them were zero, in light of Lemma \ref{properties} we would have to conclude that $\set{\gamma_k}_{k=0}^{\infty}$ is a trivial multiplier sequence. Suppose now that there exists a $n>m+2$ such that $\gamma_n=0$. By Lemma $\ref{bmax}$ (see below) there are constants $a_m, a_{m+2}$ such that the polynomial
\[
\widetilde{q(x)}=a_m\gamma_mL^{(\alpha)}_m(x)+a_{m+2}\gamma_{m+2} L^{(\alpha)}_{m+2}(x)
\]
has some non-real zeros.  On the other hand, by Lemma $\ref{open}$ then there exists $a_n$ such that 
\[
q(x)=a_mL^{(\alpha)}_m(x)+a_{m+2}L^{(\alpha)}_{m+2}(x)+a_nL^{(\alpha)}_n(x) = a_n\left(L^{(\alpha)}_n(x) +\frac{a_m}{a_n}L^{(\alpha)}_m(x)+\frac{a_{m+2}}{a_n}L^{(\alpha)}_{m+2}(x)\right)  
\]
has only real zeros.  Applying the $L^{(\alpha)}$-multiplier sequence $\set{\gamma_k}_{k=0}^{\infty}$ to $q(x)$ we obtain the polynomial $\widetilde{q(x)}$, a contradiction. Hence $\gamma_k \neq 0$ for all $k \geq m$ and the proof is complete. 
\end{proof}

\subsection{Monotonicity of $L^{(\alpha)}$-multiplier sequences}
The main result in this section is that if a classical multiplier sequence is an $L^{(\alpha)}$-multiplier sequence, then it must be non-decreasing. We note that an analogous statement is true for the Hermite multiplier sequences. The converse is also true for Hermite multiplier sequences, but not for $L^{(\alpha)}$-multiplier sequences (recall the sequences $\{r^k\}$ for $r>1$!). We next lay the necessary groundwork to establish the stated monotonicity result for $L^{(\alpha)}$-multiplier sequences.
\newline \indent We begin with two simple, but very useful lemmas. The first one essentially says that if a polynomial has only simple real zeros and one makes a small perturbation of the coefficients, then the resulting polynomial also has only real zeros. 
\begin{lem}\label{open}
Let $p$ and $q$ be real polynomials and suppose $\deg(q)<\deg(p)$.  If $p$ has only simple real zeros then there exists $\epsilon>0$ such that $p(x)+ b q(x)$ has only real zeros whenever $|b|<\epsilon$. 
\end{lem}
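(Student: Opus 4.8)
The plan is to argue by a sign‑change (intermediate value) argument, exploiting that $p$ has the maximal possible number of simple real zeros and that adding a small multiple of a lower‑degree polynomial cannot flip the sign of $p$ at any of finitely many chosen test points.

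First I would set $n=\deg(p)$ and list the zeros of $p$ as $x_1<x_2<\cdots<x_n$, all simple by hypothesis. Since $p$ has no zeros on $(-\infty,x_1)$ or on $(x_n,\infty)$, and exactly one (simple, hence sign‑changing) zero in each interval $(x_j,x_{j+1})$, I can choose real numbers $t_0<x_1<t_1<x_2<\cdots<t_{n-1}<x_n<t_n$ at which $p$ is nonzero and alternates in sign, i.e. $\operatorname{sgn}p(t_{j-1})=-\operatorname{sgn}p(t_j)$ for $j=1,\dots,n$. Put $m=\min_{0\le j\le n}|p(t_j)|>0$ and $M=\max_{0\le j\le n}|q(t_j)|$, and take $\epsilon=m/(M+1)$ (any $\epsilon>0$ works if $M=0$).

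Next I would observe that for $|b|<\epsilon$ the perturbed polynomial $p_b(x):=p(x)+bq(x)$ satisfies $|bq(t_j)|<m\le|p(t_j)|$, so $\operatorname{sgn}p_b(t_j)=\operatorname{sgn}p(t_j)$ for every $j$; hence $p_b$ still alternates in sign along $t_0<t_1<\cdots<t_n$, and the intermediate value theorem yields at least one zero of $p_b$ in each of the $n$ pairwise disjoint intervals $(t_{j-1},t_j)$. Finally, since $\deg(q)<n$, the coefficient of $x^n$ in $p_b$ equals that of $p$, so $\deg(p_b)=n$; a degree‑$n$ polynomial with $n$ distinct real zeros has only real zeros. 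Thus $p_b$ has only real zeros whenever $|b|<\epsilon$, which is the assertion.

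I do not expect a real obstacle here; the only points needing a little care are (i) confirming that a valid choice of test points exists, which uses simplicity of the zeros together with the absence of zeros outside $[x_1,x_n]$, and (ii) noting that the perturbation does not lower the degree, so the $n$ sign‑change zeros found above account for \emph{all} zeros of $p_b$. An alternative route would invoke continuity of roots (Hurwitz's theorem), or the implicit function theorem at each simple zero $x_i$ to produce a real branch $r_i(b)$ of zeros of $p_b$, but the elementary sign‑change argument is shorter and entirely self‑contained.
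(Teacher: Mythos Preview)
Your proof is correct and takes a genuinely different route from the paper. The paper argues by contradiction via Hurwitz's theorem: if no such $\epsilon$ existed, one could find $b_n\to 0$ with $p_{b_n}$ having non-real zeros, and since $p_{b_n}\to p$ uniformly on compacta, the (complex conjugate pairs of) non-real zeros of $p_{b_n}$ would have to accumulate at a real zero of $p$, contradicting simplicity. Your argument is instead direct and entirely real-variable: you fix test points where $p$ alternates in sign, choose $\epsilon$ small enough that the perturbation cannot flip any of those signs, and count $n$ sign changes to locate $n$ real zeros of a degree-$n$ polynomial. What your approach buys is an explicit bound for $\epsilon$ and no appeal to complex analysis; it also shows for free that the perturbed zeros remain simple (one per interval $(t_{j-1},t_j)$). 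The paper's Hurwitz argument, on the other hand, is shorter to state and reusable verbatim for the companion Lemma~\ref{closed}, where your sign-change method would not apply directly.
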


\begin{proof}
Suppose no such $\epsilon$ exists.  Then we can obtain a sequence of real numbers $\{b_n\}_{n=1}^{\infty}$ converging to zero such that, for each $n$, the real polynomial $p_n(x) = p(x)+ b_n q(x)$ has some non-real zeros.  The polynomials $p_n$ converge uniformly on compact subsets of $\mbb{C}$ to $p$.  By Hurwitz' Theorem, the zeros of $p$ must be limits of the zeros of $p_n$, contradicting the fact that the zeros of $p$ are all real and simple. 
\end{proof}



The next result is similar in nature.  If we begin with a polynomial which has some non-real zeros then any small perturbation of the coefficients will result in another polynomial which has some non-real zeros.  

\begin{lem}\label{closed}
Let $p$ and $q$ be real polynomials and suppose $\deg(q)<\deg(p)$.  If $p$ has some non-real zeros then there exists $\epsilon>0$ such that $p(x)+ b q(x)$ has some non-real zeros whenever $|b|<\epsilon$. 
\end{lem}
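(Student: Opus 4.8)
The plan is to localize near a non-real zero of $p$ and invoke Rouch\'e's theorem; morally this is the mirror image of the proof of Lemma \ref{open}, with Rouch\'e's theorem playing the role that Hurwitz' theorem played there. One could equally well run a Hurwitz argument by contradiction, exactly as in Lemma \ref{open}: if no $\epsilon$ worked, one would obtain $b_n\to 0$ with $p+b_nq$ having only real zeros, and then a small disk about a non-real zero $z_0$ of $p$, chosen disjoint from $\R$, would be forced to capture a zero of $p+b_nq$ for large $n$, a contradiction. I prefer the Rouch\'e version since it is constructive and produces an explicit $\epsilon$.

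First I would dispose of the trivial case $q\equiv 0$, in which $p+bq=p$ has non-real zeros for every $b$ and any $\epsilon>0$ works; so assume $q\not\equiv 0$. By hypothesis $p$ has a zero $z_0\in\C\setminus\R$. Since $p$ and $q$ each have only finitely many zeros and $z_0$ lies at positive distance from $\R$, I can fix $r>0$ so small that the closed disk $\overline{B}$ of radius $r$ about $z_0$ is disjoint from $\R$ and its boundary circle $C$ contains no zero of $p$ and no zero of $q$. Then, by compactness of $C$,
\[
\mu:=\min_{z\in C}|p(z)|>0,\qquad M:=\max_{z\in C}|q(z)|>0,
\]
and I would set $\epsilon:=\mu/M$.

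For $|b|<\epsilon$ and $z\in C$ we then have $|bq(z)|\le|b|\,M<\epsilon M=\mu\le|p(z)|$, so Rouch\'e's theorem applies on $\overline{B}$ and shows that $p+bq$ and $p$ have the same number of zeros, counted with multiplicity, in the open disk $B$. That number is at least one (it counts $z_0$), so $p+bq$ has a zero in $B\subset\C\setminus\R$, i.e.\ a non-real zero, as desired. Note that the degree hypothesis $\deg(q)<\deg(p)$ is not actually needed for this direction — it is required in Lemma \ref{open} only to prevent zeros from escaping to infinity under the perturbation, whereas here we never leave a fixed bounded region. The only step requiring any care, and the only (mild) obstacle, is the choice of $r$: it must be shrunk enough that $C$ avoids the finitely many zeros of both $p$ and $q$ (which is what guarantees $\mu,M>0$) while still keeping $\overline{B}$ off the real axis, and this is possible precisely because $\im z_0\neq 0$.
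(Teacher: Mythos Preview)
Your argument is correct. A minor quibble: the requirement that $C$ avoid the zeros of $q$ is superfluous, since $M=\max_{z\in C}|q(z)|$ is automatically positive once $q\not\equiv 0$ (a nonzero polynomial cannot vanish on an entire circle); only the condition that $C$ avoid the zeros of $p$ is doing work. Your observation that the hypothesis $\deg(q)<\deg(p)$ is unused here is also right.

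The paper takes precisely the Hurwitz-by-contradiction route you sketch in your opening paragraph: assume no $\epsilon$ works, extract $b_n\to 0$ with $p+b_nq$ real-rooted, and note that Hurwitz forces the non-real zero of $p$ to be a limit of real numbers. So your alternative sketch \emph{is} the paper's proof. Your main Rouch\'e argument is a direct, constructive variant of the same idea---Hurwitz is, after all, proved via Rouch\'e---and it has the mild advantage of producing an explicit $\epsilon=\mu/M$ rather than a bare existence statement. Either approach is equally valid here; the paper's version is shorter to write, yours gives a tangible bound.
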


\begin{proof}
We appeal to Hurwitz' Theorem once again.  If no such $\epsilon$ exists,  then we can obtain a sequence of real numbers $\{b_n\}_{n=1}^{\infty}$ converging to zero such that, for each $n$, the real polynomial $p_n(x) = p(x)+ b_n q(x)$ has only real zeros.  The polynomials $p_n$ converge uniformly on compact subsets of $\mbb{C}$ to $p$.  By Hurwitz' Theorem, the zeros of $p$ must be limits of the zeros of $p_n$, but non-real numbers are never the limit of a sequence of real numbers, a contradiction.  
\end{proof}

\begin{lem}\label{bmax}
For $n\geq 2$ and $b\in\mbb{R}$, define
\begin{eqnarray*}
f_{n, b, \alpha} (x)&:=&L_n^{(\alpha)}(x)+b L_{n-2}^{(\alpha)}(x), \quad \textrm{and}\\
E_n&:=&\set{ b \in \R \ | \ f_{n, b, \alpha}(x) \quad \textrm{has only real zeros}}.
\end{eqnarray*}
Then $\max(E_n)$ exists, and is a positive real number.
\end{lem}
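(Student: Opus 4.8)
The plan is to establish three facts about the set $E_n \subseteq \R$: it is nonempty, it is bounded above, and it is closed. A nonempty closed subset of $\R$ that is bounded above contains its supremum, so these three facts together give the existence of $\max(E_n)$, and positivity will fall out of the argument for nonemptiness. For the latter, note that $f_{n,0,\alpha} = L_n^{(\alpha)}$ has only simple real zeros (the generalized Laguerre polynomials being orthogonal on the positive real axis), and $\deg L_{n-2}^{(\alpha)} = n-2 < n = \deg L_n^{(\alpha)}$, so Lemma \ref{open} (with $p = L_n^{(\alpha)}$ and $q = L_{n-2}^{(\alpha)}$) furnishes an $\epsilon > 0$ such that $f_{n,b,\alpha}$ has only real zeros for every $b$ with $|b| < \epsilon$. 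Hence $(-\epsilon,\epsilon) \subseteq E_n$; in particular $E_n \neq \emptyset$, and once $\max(E_n)$ is known to exist it satisfies $\max(E_n) \geq \epsilon/2 > 0$.

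The heart of the proof, and the step I expect to be the main obstacle, is showing $E_n$ is bounded above, i.e. ruling out the possibility $E_n = \R$. The key observation is that $f_{n,b,\alpha}$ has degree exactly $n$ with leading coefficient $(-1)^n/n!$ independent of $b$, and, since $\deg L_{n-2}^{(\alpha)} = n-2$, its coefficient of $x^{n-1}$ is also independent of $b$, while its coefficient of $x^{n-2}$ is an affine function of $b$ with nonzero slope. Writing $x_1,\dots,x_n$ for the zeros of $f_{n,b,\alpha}$ listed with multiplicity, Vieta's formulas therefore give $\sum_i x_i = s_1$ for a constant $s_1$ independent of $b$ and $\sum_{i<j} x_i x_j = s_2 + n(n-1)\,b$ for a constant $s_2$, where the slope $n(n-1)$ is strictly positive because $n \geq 2$. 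Consequently
$$
\sum_{i=1}^{n} x_i^2 = \left(\sum_i x_i\right)^{2} - 2\sum_{i<j} x_i x_j = s_1^2 - 2 s_2 - 2 n (n-1)\, b .
$$
If $b \in E_n$ then all the $x_i$ are real, so the left-hand side is nonnegative, forcing $b \leq (s_1^2 - 2 s_2)/(2 n (n-1))$. Thus $E_n$ is bounded above.

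Finally, to see that $E_n$ is closed, suppose $b_j \in E_n$ with $b_j \to b_*$, and assume for contradiction that $b_* \notin E_n$, so $f_{n,b_*,\alpha}$ has a non-real zero. Applying Lemma \ref{closed} with $p = f_{n,b_*,\alpha}$ (degree $n$) and $q = L_{n-2}^{(\alpha)}$ (degree $n-2$), there is $\epsilon > 0$ such that $f_{n,b_*,\alpha} + b\,L_{n-2}^{(\alpha)} = f_{n,\,b_*+b,\,\alpha}$ has a non-real zero whenever $|b| < \epsilon$; choosing $j$ large enough that $|b_j - b_*| < \epsilon$ contradicts $b_j \in E_n$. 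Hence $b_* \in E_n$, so $E_n$ is closed. Combining the three parts, $\max(E_n)$ exists and, by the first part, is a positive real number. The only nontrivial ingredient is the boundedness step; the trick of reading off $\sum_i x_i^2$ as an affine function of $b$ with strictly negative slope (which uses $n \geq 2$ crucially, since for $n \leq 1$ the term $L_{n-2}^{(\alpha)}$ does not occur) is what makes it go through.
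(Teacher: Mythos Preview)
Your proof is correct and follows the same overall scaffold as the paper: use Lemma~\ref{open} to get $(-\epsilon,\epsilon)\subseteq E_n$ (hence nonemptiness and positivity of the eventual maximum), use Lemma~\ref{closed} to show $E_n$ is closed, and then argue that $E_n$ is bounded above. The closedness step is identical in content to the paper's; you phrase it sequentially while the paper shows the complement is open, but these are the same argument.

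The one genuine difference is the boundedness step. The paper differentiates $f_{n,b,\alpha}$ exactly $n-2$ times, obtaining the quadratic
\[
\frac{1}{2}x^2-(n+\alpha)x+\frac{(n+\alpha)(n+\alpha-1)}{2}+b,
\]
whose discriminant is negative for large $b$; since a real-rooted polynomial has real-rooted derivatives, this forces $b\notin E_n$. Your route instead reads off from Vieta that $\sum_i x_i$ is constant in $b$ while $\sum_{i<j}x_ix_j$ grows like $n(n-1)\,b$ (this is correct: the leading coefficient of $L_{n-2}^{(\alpha)}$ is $(-1)^{n-2}/(n-2)!$ and that of $L_n^{(\alpha)}$ is $(-1)^n/n!$, giving exactly the slope $n(n-1)$), so Newton's identity makes $\sum_i x_i^2$ an affine function of $b$ with strictly negative slope, which must be nonnegative when all roots are real. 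Both arguments are short and elementary; the paper's derivative trick yields a slightly sharper explicit bound, while yours avoids invoking Rolle/Gauss--Lucas and works directly with the coefficients.
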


\begin{proof}
By Lemma $\ref{open}$, there exists $\epsilon>0$ such that $(-\epsilon, \epsilon)\subseteq E_n$.  In particular, $E_n$ is nonempty and $\max(E_n)$, if it exists, is positive.  It now suffices to show that $E_n$ is closed and bounded above.  

Suppose $t \in (\mbb{R}\setminus E_n)$.  Then, by Lemma $\ref{closed}$, there exists $\delta>0$ such that 
$$
f_{n, t, \alpha}(x)+ b L_{n-2}^{(\alpha)}(x)  = L_n^{(\alpha)}(x)+(t+b) L_{n-2}^{(\alpha)}(x). 
$$ 
has non-real zeros whenever $|b|<\delta$.  That is to say, $(b-\delta, b+\delta)\subseteq (\mbb{R}\setminus E_n)$.  Whence, $\mbb{R}\setminus E_n$ is open and, therefore, $E_n$ is closed.

To show that $E_n$ is bounded above, we consider the $(n-2)^{nd}$ derivative of $f_{n,b,\alpha}$.  A calculation shows
$$
\frac{d^{n-2}}{dx^{n-2}} f_{n, b, \alpha} (x) = \frac{1}{2}x^2 - (n+\alpha) x + \frac{(n+\alpha)(n+\alpha-1)}{2} + b.  
$$
Thus $\ds \frac{d^{n-2}}{dx^{n-2}} f_{n, b, \alpha} (x)$, and therefore $ f_{n, b, \alpha} (x)$, have some non-real zeros whenever $b$ is sufficiently large.
\end{proof}

\begin{thm}  If the sequence of positive real numbers $\ds{\set{\gamma_k}_{k=0}^{\infty}}$ is a non-trivial $L^{(\alpha)}$-multiplier sequence, then $\gamma_k \leq \gamma_{k+1}$ for all $k \geq 0$.
\end{thm}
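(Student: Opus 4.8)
The plan is to exploit Lemma~\ref{bmax} together with Turán's inequality for classical multiplier sequences (Lemma~\ref{properties}(iv)). Since every $L^{(\alpha)}$-multiplier sequence is a classical multiplier sequence by Theorem~\ref{thmqmsms}, the terms of $\set{\gamma_k}_{k=0}^{\infty}$ satisfy $\gamma_k^2 \geq \gamma_{k-1}\gamma_{k+1}$ for all $k \geq 1$; moreover, by hypothesis every $\gamma_k$ is a positive real number, so all the divisions below are legitimate.

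First I would establish the ``two-step'' monotonicity $\gamma_{n-2} \leq \gamma_n$ for every $n \geq 2$. Fix $n \geq 2$ and set $b^\ast := \max(E_n)$, which exists and is positive by Lemma~\ref{bmax}. Since $E_n$ is closed, the polynomial $p(x) := L_n^{(\alpha)}(x) + b^\ast L_{n-2}^{(\alpha)}(x)$ has only real zeros. Applying the operator $T_L$ associated to $\set{\gamma_k}$ and factoring out the positive constant $\gamma_n$ gives
\[
T_L[p](x) = \gamma_n L_n^{(\alpha)}(x) + b^\ast \gamma_{n-2} L_{n-2}^{(\alpha)}(x) = \gamma_n\left(L_n^{(\alpha)}(x) + b^\ast\frac{\gamma_{n-2}}{\gamma_n} L_{n-2}^{(\alpha)}(x)\right),
\]
which has only real zeros because $\set{\gamma_k}$ is an $L^{(\alpha)}$-multiplier sequence. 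Dividing by $\gamma_n>0$, we conclude that $b^\ast \gamma_{n-2}/\gamma_n \in E_n$, hence $b^\ast\gamma_{n-2}/\gamma_n \leq b^\ast = \max(E_n)$, and since $b^\ast > 0$ this yields $\gamma_{n-2} \leq \gamma_n$.

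Next I would bridge from this to consecutive monotonicity. For an arbitrary $k \geq 0$, Turán's inequality at index $k+1$ gives $\gamma_{k+1}^2 \geq \gamma_k \gamma_{k+2}$, while the two-step inequality just established (applied with $n = k+2 \geq 2$) gives $\gamma_k \leq \gamma_{k+2}$. Combining these, $\gamma_{k+1}^2 \geq \gamma_k \gamma_{k+2} \geq \gamma_k^2$, and since all terms are positive we obtain $\gamma_{k+1} \geq \gamma_k$. As $k$ was arbitrary, the theorem follows.

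The point to watch is that the direct application of Lemma~\ref{bmax} only compares terms two apart in the sequence --- it does not by itself give $\gamma_k \leq \gamma_{k+1}$; the gap is closed precisely by invoking Turán's inequality, which is available because an $L^{(\alpha)}$-multiplier sequence is in particular a classical multiplier sequence. One should also verify the two bookkeeping points: that positivity of all $\gamma_k$ is exactly what the hypothesis provides (so no $\gamma_n$ in a denominator vanishes, and the passage from $\gamma_{k+1}^2\ge\gamma_k^2$ to $\gamma_{k+1}\ge\gamma_k$ is valid), and that the choice $n = k+2 \geq 2$ covers all $k \geq 0$, so no separate treatment of the inequality $\gamma_0 \leq \gamma_1$ is needed.
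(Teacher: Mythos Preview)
Your proof is correct and follows essentially the same approach as the paper: first use Lemma~\ref{bmax} to obtain the two-step inequality $\gamma_{n-2}\le\gamma_n$, then combine with Tur\'an's inequality (Lemma~\ref{properties}(iv)) to deduce consecutive monotonicity. The only difference is cosmetic---the paper writes the final step as $\left(\frac{\gamma_{n-1}}{\gamma_{n-2}}\right)^2 \ge \frac{\gamma_n}{\gamma_{n-2}} \ge 1$, which is the same chain you wrote, just rearranged.
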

\begin{proof} Let $T_L$ denote the operator associated to the $\ds{L^{(\alpha)}}$-multiplier sequence $\set{\gamma_k}_{k=0}^{\infty}$. With the notation of Lemma \ref{bmax}, for each $n \geq 2$ the function
\[
f_{n,\beta_n^*,\alpha}(x)=L_n^{(\alpha)}(x)+\beta^*_n L_{n-2}^{(\alpha)}(x) \qquad (\beta_n^* = \max(E_n))
\]
has only real zeros. It follows that 
\[
T_L[f_{n,\beta_n^*,\alpha}(x)]=\gamma_n L_n^{(\alpha)}(x)+\gamma_{n-2}\beta^*_n L_{n-2}^{(\alpha)}(x)=\gamma_{n} \left(L_n^{(\alpha)}(x)+\frac{\gamma_{n-2}}{\gamma_n}\beta^*_n L_{n-2}^{(\alpha)}(x) \right)
\]
also only has real zeros. By Lemma \ref{bmax} we must have $\ds{\frac{\gamma_{n-2}}{\gamma_n}\beta^*_n \leq \beta_n^*}$ which gives $\ds{0 < \frac{\gamma_{n-2}}{\gamma_n} \leq 1}$. On the other hand, by Lemma \ref{properties}, we have
\[
\gamma_{n-1}^2-\gamma_{n}\gamma_{n-2} \geq 0, \quad \quad (n \geq 2)
\]
which means $\ds{\left( \frac{\gamma_{n-1}}{\gamma_{n-2}}\right)^2 \geq \frac{\gamma_{n}}{\gamma_{n-2}}\geq1}$. In other words $\gamma_{n-1} \geq \gamma_{n-2}$ and the proof is complete.
\end{proof}

\section{Open questions}
Contrary to the linear sequences, quadratic (and higher degree) multiplier sequences for generalized Laguerre bases are not well understood and are far from being completely characterized. Recall from Section \ref{arbdegree} that $L^{(\alpha)}$-multiplier sequences of arbitrary degrees exist. As a result, investigations into quadratic, cubic, and higher degree $L^{(\alpha)}$-multiplier sequences are not vacuous, and rather challenging. One of the reasons for this is that although one can naturally get higher order $L^{(\alpha)}$-multiplier sequences from lower order ones, one can not get them all this way. There are for example quadratic $L^{(\alpha)}$-multiplier sequences that do not factor as a product of the linear ones (in the differential operator sense). In this section we present some partial results in the characterization of sequences of the form $\{k^2+ak+b \}_{k=0}^{\infty}$ for the simple Laguerre polynomials ($\alpha=0$) and pose some open questions.
\newline
Based on some partial results, we believe that the following conjecture is true:
\begin{conj} The sequence $\{k^2+ak+b\}_{k=0}^{\infty}$ is an $L^{(0)}$-multiplier sequence if and only if 
$$
-1\leq a \leq 3  \quad \text{   and   }\quad  \max \{ 0, a-1 \} \leq b \leq \ds{\frac{1}{8}(1+a)^2}.
$$
\end{conj}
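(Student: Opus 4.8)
The plan is to split the conjecture into its ``only if'' and ``if'' directions, and to attack each half by reducing it to the concrete tools already developed in the excerpt, namely the Borcea--Br\"and\'en criterion (Theorem~\ref{BBthm}), the differential operator representation $\delta = xD - x D^2$ obtained by specializing~\eqref{LaguerreODE} to $\alpha = 0$, and the ``test polynomial'' technique used in Propositions~\ref{r^k} and Lemmas~\ref{a_alpha_2}, \ref{bmax}. The first observation is that the operator associated to $\{k^2 + ak + b\}$ is $T = \delta^2 + (a-1)\delta + b$ (since $k^2 = k(k-1) + k$, so $k^2 + ak + b = \delta(\delta-1) + (a+1)\delta + b$ on eigenfunctions, wait --- more carefully $\delta$ has eigenvalue $k$, so $\delta^2$ has eigenvalue $k^2$ and $T = \delta^2 + a\delta + b$), and by Proposition~\ref{BIGlemma} with $n=2$ we already know the symbol of $\delta^2$ in terms of $L_2^{(0)}(x - xz)$. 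Thus $T[\exp(-xw)]$ can be written explicitly, and the whole problem becomes: for which $(a,b)$ does this entire function of two variables lie in $\overline{A}$?

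For the ``only if'' direction I would produce explicit obstructions. The necessity of $b \ge 0$ and $b \ge a-1$ should come from Lemma~\ref{properties}: the sequence must be a classical multiplier sequence with all terms of one sign, which forces $\gamma_0 = b \ge 0$ and also (since a multiplier sequence that is eventually positive must be positive from the start, and since $\gamma_k \le \gamma_{k+1}$ by the monotonicity theorem once $b>0$) one extracts $\gamma_1 = 1 + a + b \ge \gamma_0 = b$, i.e.\ $a \ge -1$, and Tur\'an's inequality $\gamma_1^2 \ge \gamma_0 \gamma_2$ should pin down one of the boundary curves. The remaining necessary bounds ($a \le 3$ and $b \le \tfrac18(1+a)^2$) I expect to get by the method of Lemma~\ref{a_alpha_2}: apply $T$ to a one-parameter family such as $(x+t)^N$ or $(x-t)^2$, expand, and compute the discriminant as a function of the parameter and of $N$; choosing the parameter to drive the discriminant negative gives non-real zeros. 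The curve $b = \tfrac18(1+a)^2$ has the flavour of a discriminant vanishing identically along a quadratic test polynomial, so I would first try $p(x) = (x - t)^2$ written in the $L_k^{(0)}$ basis (as in Proposition~\ref{r^k}), apply $T$, and optimize over $t$.

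For the ``if'' direction the natural route is the Borcea--Br\"and\'en condition: show that for $(a,b)$ in the claimed region the polynomial symbol $\Phi(z,w)$ obtained from $T[\exp(-xw)]$ (a polynomial in $x$ of degree $2$ with coefficients that are explicit rational/polynomial expressions in $w$, times $\exp(-xw)$, then regularized by a factor $(1 - xw/m)^m$ as in the degree-$n$ theorem) is non-vanishing when $\operatorname{Im} x > 0$ and $\operatorname{Im} w > 0$. Concretely $T = $ (quadratic in $x$)$\cdot$(polynomial in $D$), so $T[\exp(-xw)]$ has the form $g(x,w)\exp(-xw)$ with $g$ a polynomial; it vanishes iff $g(x,w)=0$, and $g$ factors (for fixed $w$) as a quadratic in $x$ whose roots one wants to keep out of the open upper half-plane whenever $\operatorname{Im} w > 0$. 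This is exactly the kind of ``track the argument'' estimate carried out in Lemma~\ref{a_alpha_3}, only now with a quadratic rather than linear factor, so I expect to write the two roots $x_{\pm}(w)$ explicitly and show $\operatorname{Im} x_{\pm}(w) \le 0$ for $\operatorname{Im} w > 0$, with the boundary cases $b = \tfrac18(1+a)^2$, $a = -1$, $a = 3$, $b = a-1$ corresponding to the discriminant or a root degenerating.

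The main obstacle I anticipate is the sufficiency half, and specifically controlling the quadratic-in-$x$ factor: unlike Lemma~\ref{a_alpha_3}, where the root was a ratio of three linear factors in $w$ with transparent argument bounds, here the roots of the quadratic involve a square root of a polynomial in $w$, and showing that square root does not push the imaginary part of $x_{\pm}(w)$ into the upper half-plane for \emph{all} $w$ with $\operatorname{Im} w > 0$ --- uniformly over the two-parameter region of $(a,b)$ --- will require either a clever factorization of the discriminant (perhaps the constraint $b \le \tfrac18(1+a)^2$ is precisely what makes that discriminant a perfect square or keeps it from changing the relevant half-plane) or a more hands-on region-by-region argument. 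A secondary difficulty is matching the exact boundary: the ``only if'' inequalities must be shown to be not just necessary but to carve out exactly the region where the argument estimate in the ``if'' direction goes through, so the two halves have to be calibrated against each other, and I would expect to iterate the choice of test polynomials in the necessity proof until its failure locus coincides with the boundary curves above.
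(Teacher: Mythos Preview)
The statement you are trying to prove is explicitly labeled a \emph{conjecture} in the paper, and the surrounding discussion states plainly that ``we were unable to prove the result so far and the question remains open.'' There is therefore no proof in the paper to compare your proposal against; the authors give only partial results in the direction of both implications.

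That said, your plan tracks the paper's partial progress fairly closely. For the ``only if'' direction the paper reports that $a\ge -1$ and $0\le b\le\tfrac14(1+a)^2$ are easy, that the sharper bound $b\le\tfrac18(1+a)^2$ follows from a (technical) Borcea--Br\"and\'en argument, and that $a\le 4$ and $b\ge a-1$ come from a theorem of Newton. Your idea of extracting $b\le\tfrac18(1+a)^2$ from a test family like $(x-t)^2$ is plausible but differs from the paper's stated route; you may wish to try both. Note also that the paper does \emph{not} claim to have the sharp bound $a\le 3$ anywhere, only $a\le 4$; closing that gap is part of what remains open. One small slip: for $\alpha=0$ the operator is $\delta=(x-1)D-xD^2$, not $xD-xD^2$.

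For the ``if'' direction your approach via Borcea--Br\"and\'en is exactly what the authors attempted; they succeed only on the edge $b=a-1$ for $1\le a\le 3$. The obstacle you anticipate --- controlling the two roots $x_\pm(w)$ of a genuine quadratic in $x$, with a square-root discriminant depending on $w$, uniformly over the full two-parameter region --- is precisely the difficulty the paper could not overcome. Your proposal is a reasonable research plan, but you should understand that it is a plan to settle an open problem, not to reproduce a known proof, and the hard step you flag is genuinely hard.
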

It is easy to show that if $\{k^2+ak+b\}_{k=0}^{\infty}$ is an $L^{(0)}$-multiplier sequence then then $a\geq -1$ and $\ds 0\leq b \leq \frac{1}{4} (a+1)^2$. It is a bit more involved to improve the upper bound on $b$ to $\frac{1}{8} (a+1)^2$ but it can be done by using the result of Borcea and Br\"and\'en. The proof involves the verification of stability of a certain polynomial in two complex variables, and is a bit technical. We believe that for the characterization of polynomial type $L^{(\alpha)}$-multiplier sequences of arbitrary (fixed) degree, additional techniques will be needed. 
Using a theorem due to Newton, we can easily  establish the bounds $a \leq 4$ and $a-1\leq b$ and another application of Borcea Br\"and\'en gives that if $1 \leq a \leq 3$ then $b=a-1$ is allowed, in other words $\{k^2+ak+a-1 \}$ is an $L^{(0)}$-multiplier sequence. Though these results pointed to the formulation of the above conjecture, we were unable to prove the result so far and the question remains open. The situation is similar for all polynomial sequences of degree 3 or higher.


\begin{thebibliography}{99}

\bibitem[BC-01]{BC} D. Bleecker and G. Csordas {\emph{Hermite expansions and the distribution of zeros of entire functions}}, Acta Sci. Math. (Szeged), \textbf{67} (2001), 177-196.

\bibitem[BB-09]{BB} J. Borcea and P. Br\"and\'en, {\emph{P\'olya-Schur master theorems for circular domains and their boundaries.}, Annals of Math., \textbf{170}, (2009), 465-492.}

\bibitem[CC-04]{CCsurvey} T. Craven and G. Csordas, {\emph{Composition theorems, multiplier sequences, and complex zero decreasing sequences}, in Value Distribution Theory and Related Topics, Advances in Complex Analysis and Its Applications, Vol. 3, eds. G. Barsegian, I. Laine and C. C. Yang, kluwer Press, 2004}

\bibitem[L-64]{Levin} B. Ja. Levin, {\emph{Distribution of Zeros of Entire Functions}}, Transl. Math. Mono. Vol. 5, Amer. Math. Soc., Providence, RI, 1964; revised ed. 1980.

\bibitem[M-49]{Marden} Marden, M., {\it The geometry of the zeros of a polynomial in a complex variable}, Mathematical Surveys Number III, American Mathematical Society, 1949.

\bibitem[O-63]{O} N. Obreschkoff, {\emph{Verteilung und Berechnung der Nullstellen Reeller Polynome}}, Veb Deutscher Verlag der Wissenschaften, Berlin, 1963.

\bibitem[P-07]{andrzej} Piotrowski, A., {\it Linear Operators and the Distribution of Zeros of Entire Functions}, Ph. D. dissertation

\bibitem[PS-14]{PS} G. P\'olya and J. Schur, \"Uber zwei Arten von Faktorenfolgen in der Theorie der algebraischen Gleichungen, J. Reine Angew. Math. \textbf{144}, (1914), 89-113.

\bibitem[R-60]{Rainville} Rainville, E.D., {\it Special Functions}, The Macmillan Company, New York, 1960.

\bibitem[T-50]{Turan} P. Tur\'an, \emph{Sur l'alg\`ebre fonctionnelle}, Compt. Rend. du prem. (Ao\^ut-2 Septembre 1950) Cong. des Math. Hongr., \textbf{27}, Akad\'emiai Kiad\'o, Budapest, (1952), 279-290.

\end{thebibliography}
\end{document}